\newcommand*\patchAmsMathEnvironmentForLineno[1]{%
  \expandafter\let\csname old#1\expandafter\endcsname\csname #1\endcsname
  \expandafter\let\csname oldend#1\expandafter\endcsname\csname end#1\endcsname
  \renewenvironment{#1}%
     {\linenomath\csname old#1\endcsname}%
     {\csname oldend#1\endcsname\endlinenomath}}%
\newcommand*\patchBothAmsMathEnvironmentsForLineno[1]{%
  \patchAmsMathEnvironmentForLineno{#1}%
  \patchAmsMathEnvironmentForLineno{#1*}}%
\numberwithin{equation}{section}
\newtheorem{defi}{Definition}[section]
\newtheorem{thm}[defi]{Theorem}
\newtheorem{rem}[defi]{Remark}
\newtheorem{cor}[defi]{Corollary}
\newtheorem{appli}[defi]{Application}
\newtheorem{asum}{Assumption}
\DeclareMathOperator{\essup}{ess sup}
\newcommand{\diff}{\,\mathrm{d}}
\newcommand{\diffns}{\mathrm{d}}
\begin{document}

\title[Maximum Principle for Markov Switching FBSDEJ ]{Maximum Principles of Markov Regime-Switching Forward-Backward Stochastic Differential Equations with Jumps and Partial Information }

%\title[ Local risk-minimization under a partially observed model]
%{Local risk-minimization under a partially observed Markov-mo\diffns ulated exponential L\' {e}vy model}
\author{Olivier Menoukeu-Pamen }
\address{ Institute for Financial and Actuarial Mathematics, Department of Mathematics, University of Liverpool,
United Kingdom.}
             \email{Menoukeu@liverpool.ac.uk}
\thanks{}

\date{ October 2014}

 \subjclass[2010]{93E30, 91G80, 91G10, 60G51, 60HXX, 91B30}

 \keywords{forward-backward stochastic differential equations, Malliavin calculus, regime switching, recursive utility maximization, stochastic maximum principle.}

 %\linenumbers
\maketitle

%\begin{center}
%First Version : July, 2010 \\
%This Version : \today
%\end{center}

\begin{abstract}
This paper presents three versions of maximum principle for a stochastic optimal control problem of Markov regime-switching forward-backward stochastic differential equations with jumps (FBSDEJs).  A general sufficient maximum principle for optimal control for a system driven by a Markov regime-switching forward and backward jump-diffusion model is developed. After, an equivalent maximum principle is proved. Malliavin calculus is also employed to derive a general stochastic maximum principle. The latter does not require concavity of Hamiltonian. Applications of the stochastic maximum principle to non-concave Hamiltonian and recursive utility maximization is also discussed.

\end{abstract}

 \section{Introduction}

 Optimal control problem for Markovian regime-switching model has received a lot of attention recently; See, e.g., \cite{Donel2011, DonHeu2011, LZ13, TW12, ZES2012}. One of the reasons for looking at regime switching in finance for example is that, they enable to capture exogenous macroeconomic cycles against which asset prices evolve (see \cite{Hamil89}.) There are two existing approaches to solve stochastic optimal control problem in the literature: The dynamic programing and the stochastic maximum principle. As for the dynamic programming, the reader may consult \cite{FS06, YZ99} and references therein.

The stochastic maximum principle is a generalization of the Pontryagin maximum principle, where optimizing a value function is turned into optimizing a functional called Hamiltonian. The stochastic maximum principle is given in terms of an adjoint equation, which is solution to a backward stochastic differential equation (BSDE). There is a vast literature on stochastic maximum principle and the reader may consult \cite{Ben83, Bis78, Kush72, OS071, Pen90, YZ99} for more information. One of the common application of stochastic maximum principle in finance is the mean-variance portfolio selection problem, which can be seen as a linear-quadratic problem; See, e.g., \cite{OS071, YZ99} and references therein. Another application of the maximum principle pertains to the utility maximization (classical and recusirve) or risk minimization; See, e.g., \cite{KPQ0, OS071, OS091}.

One of the motivations of this paper is the problem of stochastic differential utility (SDU) maximization of terminal wealth under Markov switching. The notion of recursive utility (or SDU) was introduced in \cite{DE} as a generalization of standard utility. The cost function of such utility is given in terms of an intermediate consumption rate and a future utility, therefore it can be represented as a solution of a backward stochastic differential equation (BSDE). They are many papers dealing with SDU maximization; See e.g., \cite{KPQ0} and references therein.

Stochastic maximum principle for regime switching models was introduced in \cite{Donel2011, DonHeu2011} for Markov regime-switching diffusion systems and extended in \cite{ZES2012} for Markov regime-switching jump-diffusion systems. In both cases, the authors developed a sufficient stochastic maximum principle. However, when solving the sufficient maximum principle, one of the main assumption is the concavity. Furthermore, in many applications, the concavity assumption may be violated. In \cite{LZ13}, the authors prove a weak sufficient and necessary maximum principle (that does not require concavity assumption) for Markov regime-switching diffusion systems. In this paper, we are able to solve an optimal control problem with non concave utility function for Markov regime-switching jumps-diffusion based on Malliavin calculus.

This paper discusses a partial information stochastic maximum principle for optimal control of forward backward stochastic differential equation (FBSDE) driven by Markov regime-switching jump-diffusion process. We first prove a general sufficient maximum for optimal control with partial information (Theorem \ref{mainressuf1}). This can be seen as a generalization of \cite[Theorem 3.1]{ZES2012} to the FBSDE setting, and of \cite[Theorem 2.3]{OS091} to the regime-switching setting. Second, we prove a version of a stochastic maximum principle which does not require concavity condition. The latter version can be seen as an equivalent maximum principle. In fact, a critical point for the performance functional of a partial information FBSDE problem is a conditional critical point for the associated Hamiltonian and vice versa. The proof of such equivalent maximum principle requires the use of some variational equations (compare with \cite[Section 4]{Pen93}). Note that the result obtained in this case is of a local form. This result is an extension of \cite[Theorem 3.1]{OS091} to the regime-switching setting. One of the main drawback of the two preceding maximum principles is the use of the adjoint processes which are defined in terms of backward stochastic differential equation (BSDE). These equations are usually hard to solve explicitly. The Malliavin calculus approach is then used to overcome this problem. This approach was introduced in \cite{MOZ12} and further developed in \cite{DOPP2011, MMPS13}. In this set up, the adjoint processes are replaced by other processes given in terms of the coefficients of the system and not by a BSDE. Note also that, the concavity condition is not needed in this approach. Using the Malliavin calculus approach, the results obtained in \cite[Example 4.7]{LZ13} can be extended to the jump-diffusion case. The results given here also generalized the ones derived in \cite{TW12}. We also show that our result can be applied to a problem stochastic differential utility (SDU) maximization of terminal wealth under Markov switching.

The paper is organized as follows: In Section \ref{framew}, the framework for the partial information control problem is introduced. Section \ref{maxiprinc1} presents a partial information sufficient maximum principle for forward backward stochastic differential equation (FBSDE) driven by Markov switching jump-diffusion process. An equivalent maximum principle is also given. In Section \ref{mallcalappro}, we a Malliavin calculus approach to solve the control problem. Section \ref{appli1} uses the results obtained to solve a problem of optimal control for Markov switching jump-diffusion model. A problem of recursive utility maximization with Markovian regime-switching is studied.

 \section{Framework}\label{framew}

This section presents the model and formulates the stochastic control problem in a continuous-time Markov regime-switching forward-backward stochastic differential equations with jumps. Here, the model in \cite{ZES2012} shall be adopted for the forward Markov regime-switching jump-diffusion model. Let $\{B(t)\}_{0\leq t\leq T}$ be a Brownian motion on the filtered probability space $
(\Omega ^{(B)},\mathcal{F}^{(B)},\{\mathcal{F}_{t}^{(B)}\mathcal{\}}_{0\leq
t\leq T},P^{(B)}),$ where $\{\mathcal{F}^{B}\mathcal{\}}_{0\leq t\leq T}$ is the $P^{(B)}$-augmented filtration generated by $B(t)$ with $\mathcal{F}^{(B)}=\mathcal{F}%
_{T}^{(B)}.$ Analogously, let a stochastic basis $(\Omega ^{(\widetilde{N})},\mathcal{F}^{(\widetilde{N})},\{\mathcal{F}_{t}^{(\widetilde{N})}\mathcal{\}}_{0\leq
t\leq T},P^{(\widetilde{N})}),$ associated with the compensated Poisson random measure $\widetilde{N}%
(\diffns t, \diffns z):=N(\diffns \zeta,\diffns s)-\nu (\diffns \zeta)\diff s. $ with L\'{e}vy measure $\nu $.
 and let a stochastic basis $(\Omega ^{(\widetilde{\Phi})},\mathcal{F}^{(\widetilde{\Phi})},\{\mathcal{F}_{t}^{(\widetilde{\Phi})}\mathcal{\}}_{0\leq
t\leq T},P^{(\widetilde{\Phi})}),$ associated with the martingale generated by a continuous-time, finite-state, observable Markov chain $\{\alpha(t)\}_{0\leq t\leq T}$ (see \eqref{semi-mar1}-\eqref{semi-mar3}).

In the following, we shall confine ourselves to the stochastic basis $(\Omega ,\mathcal{F},\mathbb{F}=\{\mathcal{F}_{t}\mathcal{\}}_{0\leq t\leq T},P)$,
where $\Omega =\Omega ^{(B)}\times \Omega ^{(\widetilde{N})} \times \Omega ^{(\alpha)},$ $\mathcal{F}=\mathcal{F%
}^{(B)}\times \mathcal{F}^{(\widetilde{N})}\times \mathcal{F}^{(\widetilde{\Phi})}$, $\mathcal{F}_{t}=\mathcal{F}%
_{t}^{(B)}\times \mathcal{F}_{t}^{(\widetilde{N})}\times \mathcal{F}_{t}^{(\widetilde{\Phi})}$, \\$P=P^{(B)}\times P^{(\widetilde{N})}\times P^{(\widetilde{\Phi})}.$

 $\alpha:=\{\alpha(t)\}_{0\leq t\leq T}$ is an irreducible homogeneous continuous-time Markov chain with a finite state space $\mathbb{S}=\{\mathbf{e}_1,\mathbf{e}_2, \ldots ,\mathbf{e}_D\}\subset \mathbb{R}^D$, where $D\in \mathbb{N}$, and the $j$th component of $e_i$ is the Kronecker delta $\delta_{ij}$ for each $i,j=1,\ldots, D$.  The Markov chain is characterized by a rate (or intensity) matrix $\Lambda:=\{\lambda_{ij}:1\leq i,j\leq D\}$ under $P$. Note that, for each $1\leq i,j\leq D,\,\,\lambda_{ij}$ is the transition intensity of the chain from state $e_i$ to state $e_j$ at time $t$. Recall that for $i\neq j,\,\,\lambda_{ij}\geq 0$ and $\sum_{j=1}^D \lambda_{ij}=0$, hence $\lambda_{ii}\leq 0$.

 It follows from \cite{EAM94} that $\alpha$  admits the following semimartingale representation
\begin{align}\label{semi-mar1}
\alpha(t)=\alpha(0)+\int_0^t\Lambda ^T\alpha(s)\diffns s+M(t),
\end{align}
where $M:=\{M(t)\}_{t\in [0,T]}$ is a $\mathbb{R}^D$-valued $(\mathbb{F},P)$-martingale and $\Lambda ^T$ denotes the transpose of a matrix.

We shall introduce the set of jump martingales associated with the Markov chain. For each $1\leq i,j\leq D$, with $i\neq j$, and $t\in [0,T]$, denote by $J^{ij}(t)$ the number of jumps from state $e_i$ to state $e_j$ up to time $t$. It can be shown (see \cite{EAM94}) that
\begin{align}\label{semi-mar2}
J^{ij}(t)=\lambda_{ij}\int_0^t\langle \alpha(s-),e_i\rangle \diffns s +m_{ij}(t),
\end{align}
where $m_{ij}:=\{m_{ij}(t)\}_{t\in [0,T]}$ with $m_{ij}:=\int_0^t\langle \alpha(s-),e_i\rangle \langle \diffns M(s),e_j\rangle$ is a $(\mathbb{F},P)$-martingale.

Fix $j\in \{1,2,\ldots,D\}$, denote by $\Phi_j(t)$ the number of jumps into state $e_j$ up to time $t$. Then
\begin{align}\label{semi-mar3}
\Phi_j(t)&:=\sum_{i=1,i\neq j}^D J^{ij}(t)= \sum_{i=1,i\neq j}^D \lambda_{ij}\int_0^t\langle \alpha(s-),e_i\rangle \diffns s +\widetilde{\Phi}_{j}(t)\notag\\
&= \lambda_j(t) + \widetilde{\Phi}_{j}(t),
\end{align}
with $\widetilde{\Phi}_{j}(t)=\sum_{i=1,i\neq j}^D m_{ij}(t)$ and $\lambda_j(t)=\sum_{i=1,i\neq j}^D \lambda_{ij}\int_0^t\langle \alpha(s-),e_i\rangle \diffns s
$. Note that, for each
 $j\in \{1,2,\ldots,D\},\,\,\,\widetilde{\Phi}_{j}:=\{\widetilde{\Phi}_{j}(t)\}_{t\in [0,T]}$ is a $(\mathbb{F},P)$-martingale.

Let introduce a Markov regime-switching Poisson random measure. Assume that  $N(\diffns \zeta,\diffns s)$ is a Poisson random measure on $\Big(\mathbb{R}_+\times \mathbb{R}_0,\mathcal{B}(\mathbb{R}_+) \otimes\mathcal{B} _0\Big)$ where $\mathbb{R}_{0}:=\mathbb{R}\backslash \left\{ 0\right\}$ and $\mathbb{R}_{+}:=[0,+\infty)$. Denote by $ \nu_\alpha(\diffns \zeta)\diffns t  $ its compensator (or dual predictable projection), then it is defined by:
\begin{align}\label{comp1}
\nu_\alpha(\diffns \zeta)\diffns t:=\sum_{j=1}^D\langle \alpha(t-),e_j\rangle \nu_j(\diffns \zeta)\diffns t.
\end{align}
For each $j\in \{1,2,\ldots,D\},\,\,\nu_j(\diffns \zeta)$ is the conditional density of the jump size when the Markov chain $\alpha$ is in state $e_i$ and satisfies $\int_{\mathbb{R}_{0}}\min(1,\zeta^{2})\nu_i (\diffns \zeta)< \infty$. Moreover, define $\widetilde{N}_\alpha(\diffns \zeta,\diffns s)$ by
\begin{equation}\label{comppoissrm1}
\widetilde{N}_\alpha(\diffns \zeta,\diffns s):=N(\diffns \zeta,\diffns s)-\nu_\alpha(\diffns \zeta)\diffns t
\end{equation}

Suppose that the state process $X(t)=X^{(u)}(t,\omega);\,\,0 \leq t \leq T,\,\omega \in \Omega$ is a controlled Markov regime-switching jump-diffusion of the form
\begin{equation}\left\{
\begin{array}{llll}
\diff X (t) & = & b(t,X(t),\alpha(t),u(t),\omega)\diff t +\sigma (t,X(t),\alpha(t),u(t),\omega)\diff B(t)  \\
&  & +\displaystyle \int_{\mathbb{R}_0}\gamma(t,X(t),\alpha(t),u(t),\zeta, \omega)\,\widetilde{N}_\alpha(\diffns \zeta,\diffns t)\\
& &+\eta (t,X(t),\alpha(t),u(t),\omega)\cdot \diffns \widetilde{\Phi}(t),\,\,\,\,\,\, t \in [ 0,T] \label{eqstateprocess1} \\
X(0) &=& x_0,
\end{array}\right.
\end{equation}
where $T>0$ is a given constant. $u(\cdot)$ is the control process.

The functions $b:[0,T]\times \mathbb{R} \times \mathbb{S} \times \mathcal{U} \times \Omega \rightarrow \mathbb{R}\,,\,\, \sigma:[0,T]\times \mathbb{R} \times \mathbb{S} \times \mathcal{U} \times \Omega \rightarrow \mathbb{R}$, $\gamma:[0,T]\times \mathbb{R} \times \mathbb{S} \times \mathcal{U} \times \mathbb{R}_0\times \Omega \rightarrow \mathbb{R}$ and $\eta:[0,T]\times \mathbb{R} \times \mathbb{S} \times \mathcal{U} \times \Omega \rightarrow \mathbb{R}$ are given such that for all $t,\,\,\,b(t,x,e_i,u,\cdot)$, $ \sigma(t,x,e_i,u,\cdot)$, $\gamma(t,x,e_i,u,z,\cdot)$ and $ \eta(t,x,e_i,u,\cdot)$ are $\mathcal{F}_t$-measurable for all $x \in\mathbb{R},\,\,\,e_i \in\mathbb{S},\,\,u \in\mathcal{U}$ and $z \in\mathbb{R}_0$.

 We suppose that we are given a subfiltration
\begin{align}\label{subfil}
\mathcal{E}_t\subset \mathcal{F}_t\,;\,\,\,t\in [0,T],
\end{align}
representing the information available to the controller at time $t$. Note that one possible subfiltration $\mathcal{E}_{t}$ in \eqref{subfil} is the $%
\delta $-delayed information given by $
\mathcal{E}_{t}=\mathcal{F}_{(t-\delta)^+ };\,\,\,t\geq 0,$ where $\delta \geq 0$ is a given constant delay.

We consider the associated BSDE's in the unknowns  $\Big(Y(t), Z(t), K(t,\zeta), V(t)\Big)$ of the form
\begin{equation}\left\{
\begin{array}{llll}
\diff Y (t) & = & - g(t,X(t),\alpha(t),Y(t), Z(t), K(t,\cdot),V(t),u(t))\diff t \,+\,Z(t)\diff B(t)  \\
&  & + \displaystyle \int_{\mathbb{R}_0}K(t,\zeta)\,\widetilde{N}_\alpha(\diffns \zeta,\diffns t)+V(t)\cdot \diffns \widetilde{\Phi}(t);\,\,\, t \in [ 0,T]  \label{eqassBSDE} \\
Y(T) &=& h(X(T),\alpha(T))\,,
\end{array}\right.
\end{equation}
where $g:[0,T]\times \mathbb{R} \times \mathbb{S} \times \mathbb{R} \times \mathbb{R} \times \mathcal{R} \times \mathbb{R} \times \mathcal{U} \times \Omega \rightarrow \mathbb{R}$ and $h:\mathbb{R}\times \mathbb{S}  \rightarrow \mathbb{R}$ are such that the BSDE \eqref{eqassBSDE} has a unique solution. As for sufficient conditions for existence and uniqueness of Markov regime-switching BSDEs, we refer the reader for e.g., to \cite{CoEl10} or \cite{Crep} and references therein.

Let $f:[0,T]\times \mathbb{R} \times \mathbb{S}  \times \mathbb{R} \times \mathbb{R}\times \mathcal{R}  \times \mathbb{R} \times \mathcal{U} \times \Omega \rightarrow \mathbb{R}, \,\,\,\varphi:\mathbb{R}  \times \mathbb{S}\rightarrow \mathbb{R}$ and $\psi:\mathbb{R}  \rightarrow \mathbb{R}$ be given $C^1$ functions with respect to their arguments. Assume that the performance functional is as follows
\begin{align}
J(u):=E\Big[ \int_0^T  f(s,X(s),\alpha(s),Y(s), Z(s), K(s,\cdot),V(s),u(s))\diff s + \varphi(X(T),\alpha(T))\,+\,\psi(Y(0))\Big].\label{perfunct1}
\end{align}
Here, $f,\,\varphi$ and $\psi$ may be seen as profit rates, bequest functions and ``utility evaluations" respectively, of the controller.

Let $\mathcal{A}_{\mathcal{E}}$ denote the family of admissible control $u$, such that there are contained in the set of $\mathcal{E}_t$-predictable control, and the system \eqref{eqstateprocess1}-\eqref{eqassBSDE} has a unique solution, and
\begin{align*}
&E\Big[ \int_0^T\Big\{| f(t,X(t),\alpha(t),Y(t), Z(t), K(t,\cdot),V(t),u(t))|\\
&\,\,+\Big| \frac{\partial f}{\partial x_i}(t,X(t),,\alpha(t),Y(t), Z(t), K(t,\cdot),V(t),u(t)) \Big|^2  \Big\}\diffns t  \Big.\\
&\Big. \varphi(X(T),\alpha(T))+|\varphi^\prime(X(T),\alpha(T))|^2+|\psi(Y(0))|+|\psi^\prime(Y(0))|^2\Big]<\infty \text{ for } x_i=x,y,z,k \text{ and } u.
\end{align*}
The set $\mathcal{U}\subset \mathbb{R}$ is a given convex set such that $u(t)\in \mathcal{U}$ for all $t\in [0,T]$ a.s., for all $u \in \mathcal{A}_{\mathcal{E}}$.
\begin{rem}
The system \eqref{eqstateprocess1}-\eqref{eqassBSDE} is a semi-couple forward-backward SDE. Existence and uniqueness results of the SDE \eqref{eqstateprocess1} follows from existing literature under global Lipschitz continuity and growth condition of the coefficients. Therefore, existence and uniqueness of the solution of \eqref{eqstateprocess1}-\eqref{eqassBSDE} will follow from the existence and uniqueness of the BSDE \eqref{eqassBSDE}. As for existence and uniqueness of BSDE with poisson jump and Markov chain, the reader may consult \cite{CoEl10} or \cite{Crep} and references therein.
\end{rem}

The problem we consider is the following: find $u^\ast \in \mathcal{A}_{\mathcal{E}} $ such that
\begin{align}\label{prob111}
   J(u^\ast)=\sup_{u\in  \mathcal{A}_{\mathcal{E} }}J(u).
  \end{align}

\section{Maximum Principle for a Markov regime-switching Forward-Backward stochastic differential equation with jumps}\label{maxiprinc1}

In this section, we derive a general sufficient stochastic maximum principle for a forward-backward Markov regime-switching jump-diffusion model. After we shall derive an equivalent maximum principle.

For this purposes, define the Hamiltonian
\begin{equation*}
    H:[0,T] \times \mathbb{R}\times \mathbb{S}\times \mathbb{R}\times \mathbb{R}\times \mathcal{R}\times \mathbb{R}\times \mathcal{U}\times \mathbb{R}  \times \mathbb{R}\times \mathbb{R} \times \mathcal{R} \times \mathbb{R}
    \longrightarrow \mathbb{R},
\end{equation*}
by
\begin{align}
   & H\left(t,x,e_i,y,z,k,v,u,a,p,q,r(\cdot),w\right)\notag\\
    :=& f(t,x,e_i,y,z,k,v,u)+a g(t,x,e_i,y,z,k,v,u)+  p b(t,x,e_i,u) \,\notag\\
    &+q\sigma (t,x,e_i,u)+\int_{\mathbb{R}_0}r(t,\zeta)\gamma(t,x,e_i,u,\zeta)\nu_i(\diffns \zeta)+\sum_{j=1}^D\eta^j(t,x,e_i,u)w^j(t)\lambda_{ij} ,\label{Hamiltoniansddg1}%\\
    %& +\sum_{j=1}^D\eta_j(t,x,e_i,u)w_j(t)\lambda_{ij} ,%\label{Hamiltoniansddg1}
\end{align}
where $\mathcal{R} $ denotes the set of all functions $k:[0,T]\times \mathbb{R}_0 \rightarrow \mathbb{R}$ for which the integral in \eqref{Hamiltoniansddg1} converges.

We suppose that $H$ is  Fr\'echet differentiable in the variables $x,y,z,k,v,u$ and that $\nabla_k H(t,\zeta)$ is a random measure which is absolutely continuous with respect to $\nu_\alpha$. Define the adjoint processes $A(t),\,p(t),\,q(t), r(t,\cdot)$ and $w(t),\,\,\,t\in[0,T]$ associated to these Hamiltonians by the following system of Markov regime-switching FBSDEJs\begin{enumerate}
\item Forward SDE in $A(t)$
\begin{equation}\left\{
\begin{array}{llll}
\diffns  A (t) & = & \dfrac{\partial H}{\partial y} (t) \diff t + \dfrac{\partial H}{\partial z} (t) \diffns B(t)+ \displaystyle \int_{\mathbb{R}_0} \dfrac{\diffns \nabla_k H}{\diffns \nu_\alpha(\zeta)} (t,\zeta) \,\widetilde{N}_\alpha(\diffns \zeta,\diffns t) + \nabla_vH(t)\cdot\diffns \widetilde{\Phi}(t);\,\,\,t\in [0,T] \\
A(0) &=& \psi^\prime(Y(0)).\label{lambda1}
\end{array}\right.
\end{equation}
Here and in what follows, we use the notation
$$
 \dfrac{\partial H}{\partial y} (t) = \dfrac{\partial H}{\partial y} (t,X(t),\alpha(t),u(t),Y(t), Z(t), K(t,\cdot),V(t),A(t),p(t),q(t),r(t,\cdot),w(t)),
$$
etc,$\dfrac{\diffns \nabla_k H}{\diffns \nu(\zeta)} (t,\zeta) $ is the Radon-Nikodym derivative of $ \nabla_k H(t,\zeta)$ with respect to $\nu(\zeta)$ and $\nabla_vH(t)\cdot\diffns \widetilde{\Phi}(t)=\sum_{j=1}^D \dfrac{\partial H}{\partial v^j} (t)\diffns \widetilde{\Phi}_j(t)$ with $V^j=V(t,e_j)$.

\item The Markovian regime-switching BSDE in $(p(t),q(t),r(t,\cdot),w(t))$
\begin{equation}\left\{
\begin{array}{llll}
\diffns p (t) & = & - \dfrac{\partial H}{\partial x} (t) \diffns t + q(t)\diff B(t)+ \displaystyle  \int_{\mathbb{R}_0} r (t,\zeta) \,\widetilde{N}_\alpha(\diffns\zeta,\diffns t)+ w(t)\cdot\diffns \widetilde{\Phi}(t);\,\,\,t\in [0,T]  \\
p (T) &=& \dfrac{\partial \varphi}{\partial x}(X(T),\alpha(T))\,+A(T) \dfrac{\partial h}{\partial x} (X(T),\alpha(T))\label{ABSDE1},
\end{array}\right.
\end{equation}

\end{enumerate}

\subsection{A sufficient maximum principle}
In what follows, we give the sufficient maximum principle.
\begin{thm}[Sufficient maximum principle]\label{mainressuf1}
Let $\widehat{u}\in \mathcal{A}_{\mathcal{E} }$ with corresponding solutions $\widehat{X}(t),(\widehat{Y}(t), \widehat{Z}(t), \widehat{K}(t,\zeta), \widehat{V}(t)), \widehat{A}(t),(\widehat{p}(t),\widehat{q}(t),\widehat{r}(t,\zeta),\widehat{w}(t))$ of \textup{\eqref{eqstateprocess1}}, \textup{\eqref{eqassBSDE}}, \textup{\eqref{lambda1}} and \textup{\eqref{ABSDE1}} respectively. Suppose that the following are true:
\begin{enumerate}
\item The functions
\begin{align}\label{thsuffcond1}
x \mapsto  h(x, e_i),\,\,x \mapsto  \varphi(x, e_i),\,\,y \mapsto  \psi(y),
\end{align} are concave for all $t\in [0,T]$.

\item  The function

\begin{align}\label{thsuffcond2}
\widetilde{H}(x,y,z,k,v)=\essup_{u\in \mathcal{U} }E\Big[H( t,x,e_i,y,z,k,v,u,\widehat{a},\widehat{p}(t),\widehat{q}(t),\widehat{r}(t,\cdot),\widehat{w}(t))| \mathcal{E}_t\Big]
\end{align}
is concave  for all $(t,e_i) \in  [0,T]\times \mathbb{S}$ a.s.

\item
\begin{align}\label{thsuffcond4}
\underset{u\in \mathcal{U} }{\essup}&\Big\{E\Big[H (t,\widehat{X}(t),\alpha(t), u,\widehat{Y}(t), \widehat{Z}(t), \widehat{K}(t,\cdot),\widehat{V}(t),\widehat{A}(t),\widehat{p}(t),\widehat{q}(t),\widehat{r}(t,\cdot),\widehat{w}(t)) \Big. \Big |  \mathcal{E}_t\Big]\Big\} \notag\\
&=E\Big[H (t,\widehat{X}(t),\alpha(t), \widehat{u},\widehat{Y}(t), \widehat{Z}(t), \widehat{K}(t,\cdot),\widehat{V}(t),\widehat{A}(t),\widehat{p}(t),\widehat{q}(t),\widehat{r}(t,\cdot),\widehat{w}(t)) \Big. \Big |  \mathcal{E}_t\Big]
\end{align}
for all $t\in [0,T]$, a.s.

\item Assume that  $\frac{\diffns }{\diffns \nu}\nabla_k\widehat{g}(t,\xi)>-1$.
\item In addition, assume the following growth condition
\begin{align}\label{thsuffcond5}
&E\Big[\int_0^T\Big\{  \widehat{p}^2(t)  \Big(  (\sigma(t)-\widehat{\sigma}(t))^2+ {\int}_{\mathbb{R}_0}( \gamma(t,\zeta)-\widehat{\gamma} (t,\zeta) )^2\,\nu_\alpha(\diffns \zeta)+\sum_{j=1}^D(\eta^j(t)-\widehat{\eta}^j(t) )^2\lambda_{j}(t) \Big)\Big.    \Big.\notag\\
&+(X(t)-\widehat{X}(t))^2 \Big(  \widehat{q}^2(t)+  {\int}_{\mathbb{R}_0}\widehat{r}^2 (t,\zeta) \nu_\alpha(\diffns  \zeta)+\sum_{j=1}^D(w^j)^2(t)\lambda_{j}(t)  \Big)\notag\\
&+(Y(t)-\widehat{Y}(t))^2 \Big( (\dfrac{\partial \widehat{H}} {\partial z} )^2(t)
+  {\int}_{\mathbb{R}_0} \Big\| \nabla_k \widehat{H}(t,\zeta)\Big\|^2 \nu_\alpha(\diffns \zeta)+ \sum_{j=1}^D  (\dfrac{\partial \widehat{H}} {\partial v^j} )^2(t) \lambda_{j}(t)  \Big)\notag\\
&\Big.    \Big. +\widehat{A}^2(t)  \Big(  (Z(t)-\widehat{Z}(t))^2+ {\int}_{\mathbb{R}_0}( K (t,\zeta)-\widehat{K} (t,\zeta) )^2\nu_\alpha(\diffns  \zeta) + \sum_{j=1}^D(V^j(t)-\widehat{V}^j(t) )^2\lambda_{j}(t)   \Big)\Big\} \diffns t \Big]<\infty .
\end{align}
\end{enumerate}
Then $\widehat{u}$ is an optimal control process and $\widehat{X}$ is the corresponding controlled state process.

\end{thm}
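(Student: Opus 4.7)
The plan is to fix an arbitrary $u\in\mathcal{A}_{\mathcal{E}}$ with trajectory $(X,Y,Z,K,V)$ and show $J(u)-J(\widehat u)\le 0$. I would split
\[
J(u)-J(\widehat u)=I_1+I_2+I_3
\]
along the three summands of \eqref{perfunct1} and immediately use concavity of $\varphi$ and $\psi$ from assumption (1), together with the initial datum of \eqref{lambda1}, to obtain $I_2\le E[\partial_x\varphi(\widehat X(T),\alpha(T))(X(T)-\widehat X(T))]$ and $I_3\le E[\widehat A(0)(Y(0)-\widehat Y(0))]$.

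For $I_1$, the Hamiltonian identity \eqref{Hamiltoniansddg1} (with adjoints frozen at $\widehat A,\widehat p,\widehat q,\widehat r,\widehat w$) yields
\[
f-\widehat f=(H-\widehat H)-\widehat A(g-\widehat g)-\widehat p(b-\widehat b)-\widehat q(\sigma-\widehat\sigma)-\int_{\mathbb{R}_0}(\gamma-\widehat\gamma)\widehat r\,\nu_\alpha(\diff\zeta)-\sum_j(\eta^j-\widehat\eta^j)\widehat w^j\lambda_j.
\]
I would then apply It\^o's product formula to $\widehat p(t)(X(t)-\widehat X(t))$ and to $\widehat A(t)(Y(t)-\widehat Y(t))$ on $[0,T]$, take expectations, and invoke the moment bound (5) to discard the stochastic integrals against $B$, $\widetilde N_\alpha$ and $\widetilde\Phi$. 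The expansion of $\widehat p(X-\widehat X)$ supplies exactly the $\widehat p(b-\widehat b),\widehat q(\sigma-\widehat\sigma),\int\widehat r(\gamma-\widehat\gamma)\nu_\alpha$ and $\sum(\eta^j-\widehat\eta^j)\widehat w^j\lambda_j$ integrals that cancel those arising in $I_1$, at the cost of $E[\widehat p(T)(X(T)-\widehat X(T))]$ and $-E[\int_0^T(X-\widehat X)\partial_x\widehat H\diff t]$; similarly $\widehat A(Y-\widehat Y)$ absorbs $\widehat A(g-\widehat g)$ and produces the analogous integrals in $\partial_y\widehat H,\partial_z\widehat H,\diff\nabla_k\widehat H/\diff\nu_\alpha,\partial_{v^j}\widehat H$ together with the boundary pair $E[\widehat A(T)(Y(T)-\widehat Y(T))]-E[\widehat A(0)(Y(0)-\widehat Y(0))]$. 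Substituting the terminal conditions $\widehat p(T)=\partial_x\varphi(\widehat X(T),\alpha(T))+\widehat A(T)\partial_x h(\widehat X(T),\alpha(T))$ and $Y(T)-\widehat Y(T)=h(X(T),\alpha(T))-h(\widehat X(T),\alpha(T))$ and collapsing with $I_2,I_3$, the boundary terms telescope to the residual $E[\widehat A(T)\{h(X(T),\alpha(T))-h(\widehat X(T),\alpha(T))-\partial_x h(\widehat X(T),\alpha(T))(X(T)-\widehat X(T))\}]$, which is non-positive because $\widehat A$ stays positive (the sign condition (4) on $\nabla_k\widehat g$ propagates through \eqref{lambda1}) and $h(\cdot,e_i)$ is concave. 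What remains is
\begin{align*}
J(u)-J(\widehat u)\le E\Big[\int_0^T\big\{&(H-\widehat H)-\partial_x\widehat H(X-\widehat X)-\partial_y\widehat H(Y-\widehat Y)-\partial_z\widehat H(Z-\widehat Z)\\
&-\int_{\mathbb{R}_0}\tfrac{\diff\nabla_k\widehat H}{\diff\nu_\alpha}(K-\widehat K)\nu_\alpha(\diff\zeta)-\sum_j\partial_{v^j}\widehat H(V^j-\widehat V^j)\lambda_j\big\}\diff t\Big].
\end{align*}

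To close, I condition on $\mathcal{E}_t$: by definition of $\widetilde H$ one has $E[H\mid\mathcal{E}_t]\le\widetilde H(X,Y,Z,K,V)$, while the maximum condition (3) upgrades the hatted version to the equality $E[\widehat H\mid\mathcal{E}_t]=\widetilde H(\widehat X,\widehat Y,\widehat Z,\widehat K,\widehat V)$. Concavity of $\widetilde H$ from assumption (2), combined with the envelope identity $\partial_x\widetilde H(\widehat X,\ldots)=E[\partial_x\widehat H\mid\mathcal{E}_t]$ at the maximizer $\widehat u$ (and its analogues in $y,z,k,v$), then shows the integrand is conditionally non-positive; the tower property gives $J(u)\le J(\widehat u)$. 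The hard part will be executing this envelope step cleanly under partial information: the variations $X-\widehat X$ etc.\ are $\mathcal F_t$-measurable while the derivatives of $\widetilde H$ at $(\widehat X,\ldots)$ are $\mathcal{E}_t$-measurable, so moving the conditioning through products requires care; everything else is routine It\^o bookkeeping enabled by (5).
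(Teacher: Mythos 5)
Your proposal is correct and follows essentially the same route as the paper: the same decomposition $J(u)-J(\widehat u)=I_1+I_2+I_3$, the same Hamiltonian substitution for $I_1$, the same It\^o product expansions of $\widehat p(X-\widehat X)$ and $\widehat A(Y-\widehat Y)$ with the growth condition (5) killing the martingale terms, the same cancellation of the boundary terms via concavity of $h$ and positivity of $\widehat A$ (the role of condition (4)), and the same final conditional-concavity/maximum-condition argument that the paper delegates to the references \cite{FOS05, ZES2012}.
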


\begin{rem}
In \textup{Theorem \ref{mainressuf1}} and in the following, we shall use the notations
$X(t)=X^{\widehat{u}}(t)$ and $Y(t)=Y^{\widehat{u}}(t)$ are the processes associated to the control $\widehat{u}(t)$.
Furthermore, put\\
$
\dfrac{\partial \widehat{H}}{\partial x}(t):=\dfrac{\partial H}{\partial x}  H (t,\widehat{X}(t),\alpha(t), \widehat{u},\widehat{Y}(t), \widehat{Z}(t), \widehat{K}(t,\cdot),\widehat{V}(t),\widehat{A}(t),\widehat{p}(t),\widehat{q}(t),\widehat{r}(t,\cdot),\widehat{w}(t)) $
and similarly for $
\dfrac{\partial \widehat{H}}{\partial y}(t), \dfrac{\partial \widehat{H}}{\partial z}(t), \nabla_{k} \widehat{H}(t,\zeta), \dfrac{\partial \widehat{H}}{\partial v^j}(t)$ and $\dfrac{\partial \widehat{H}}{\partial u}(t)$.

\end{rem}

\begin{proof}[Proof of Theorem \protect\ref{mainressuf1}]
We shall prove that
$J(x,\widehat{u},e_i)\geq J(x,u,e_i) \text{ for all } u \in \mathcal{A}_{\mathcal{E}}.$

Choose  $u \in  \mathcal{A}_{\mathcal{E}}$ and consider
\begin{align}\label{eqdiff1}
J(x,u,e_i)-J(x,\widehat{u},e_i)=I_1+I_2+I_3,
\end{align}
where
\begin{align}
I_1=&E\Big[ \int_0^T \Big\{ f(t,X(t),\alpha(t),Y(t), Z(t), K(t,\cdot),V(t),u(t))\notag\\
&-f(t, \widehat{X}(t),\alpha(t), \widehat{Y}(t),  \widehat{Z}(t),  \widehat{K}(t,\cdot), \widehat{V}(t), \widehat{u}(t)) \Big\}\diff t \Big], \label{eqI11}\\
I_2=&E\Big[ \varphi(X(T),\alpha(T)) -\varphi(\widehat{X}(T), \alpha(T))\Big],\label{eqI21}\\
I_3=&E\Big[\psi(Y(0))\,-\,\psi(\widehat{Y}(0))\Big]. \label{eqI31}
\end{align}
By the definition of $H$, we get
\begin{align}
I_1=&E\Big[ \int_0^T \Big\{ H(t)-\widehat{H}(t)- \widehat{A}(t)(g(t)-\widehat{g}(t))-\widehat{p}(t)(b(t)-\widehat{b}(t))-\widehat{q}(t)(\sigma(t)-\widehat{\sigma}(t))\Big.\Big.\notag\\
&\Big.\Big. -\int_{\mathbb{R}_0}\widehat{r}(t,\zeta)(\gamma(t,\zeta)-\widehat{\gamma}(t,\zeta))\nu_\alpha(\diff\zeta)   -\sum_{j=1}^D\widehat{w}^j(t)(\eta^j(t)-\widehat{\eta}^j(t) )\lambda_{j}(t)   \Big\} \diff t\Big]. \label{eqI12}
\end{align}
By the concavity of $\varphi$ in $x$, the It\^o formula, \eqref{eqstateprocess1}, \eqref{ABSDE1} and \eqref{thsuffcond5} we get
\begin{align}\label{eqI22}
I_2\leq& E\Big[\dfrac{\partial \varphi}{\partial x} (\widehat{X}(T),\alpha(T))(X(T)-\widehat{X}(T)) \Big]\notag\\
=& E\Big[\widehat{p}(T) (X(T)-\widehat{X}(T))\Big]-E\Big[\widehat{A}(T)\dfrac{\partial h}{\partial x}(\widehat{X}(T),\alpha(T))(X(T)-\widehat{X}(T))\Big]\notag\\
%=& E\Big[\int_0^T\widehat{p}(t) (\diffns X(t)-\diffns \widehat{X}(t))+\int_0^T(X(t^-)-\widehat{X}(t^-))
%\diff\widehat{p}(t)+\int_0^T(\sigma(t)-\widehat{\sigma}(t))\widehat{q}(t)\diff t \Big.\notag\\
%&+\int_0^T\int_{\mathbb{R}_0}(\gamma(t)-\widehat{\gamma}(t))\widehat{r}(t,\zeta)\nu_\alpha(\diffns\zeta)\diff t+  \int_0^T \sum_{j=1}^D\widehat{w}^j(t)(\eta_j(t)-\widehat{\eta}_j(t) )\lambda_{j}(t)    \diff t    \Big]\notag\\
%&-E\Big[\widehat{A}(T)\dfrac{\partial h}{\partial x} (\widehat{X}(T),\alpha(T))(X(T)-\widehat{X}(T))\Big]\notag\\
=& E\Big[\int_0^T\Big\{\widehat{p}(t) (b(t)-\widehat{b}(t))\diff t+(X(t^-)-\widehat{X}(t^-))
  \Big(-\frac{\partial \widehat{H}}{\partial x}(t)\Big) +(\sigma(t)-\widehat{\sigma}(t))\widehat{q}(t) \Big.\notag\\
&+\int_{\mathbb{R}_0}(\gamma(t,\zeta)-\widehat{\gamma}(t,\zeta))\widehat{r}(t,\zeta)\nu_\alpha(\diffns\zeta)+  \sum_{j=1}^D\widehat{w}^j(t)(\eta^j(t)-\widehat{\eta}^j(t) )\lambda_{j}(t)   \Big\} \diff t    \Big]\notag\\
&-E\Big[\widehat{A}(T)\dfrac{\partial h}{\partial x} (\widehat{X}(T),\alpha(T))(X(T)-\widehat{X}(T))\Big].
\end{align}

By the concavity of $\psi,h$, the It\^o formula, \eqref{eqassBSDE} and \eqref{lambda1}, we get
\begin{align}\label{eqI32}
I_3\leq & E\Big[\psi^\prime(\widehat{Y}(0))(Y(0)-\widehat{Y}(0))\Big]\notag\\
=& E\Big[\widehat{A}(0)(Y(0)-\widehat{Y}(0))\Big]\notag\\
=& E\Big[\widehat{A}(T) \{h(X(T),\alpha(T))-h(\widehat{X}(T),\alpha(T))\} \Big]-E\Big[\int_0^T \Big\{\dfrac{\partial \widehat{H}}{\partial y}(t) (Y(t)-\widehat{Y}(t))  \Big.\notag\\
+&\widehat{A}(t) (-g(t)+\widehat{g}(t)) + (Z(t)-\widehat{Z}(t))\dfrac{\partial \widehat{H}}{\partial z}(t) \notag\\
 \Big.+&\int_{\mathbb{R}_0}(K(t,\zeta)-\widehat{K}(t,\zeta))\nabla_k \widehat{H}(t,\zeta)\nu_\alpha(d\zeta)+\sum_{j=1}^D\dfrac{\partial \widehat{H}}{\partial v^j}(t)  (V^j(t)-\widehat{V}^j(t) )\lambda_{j}(t)  \Big\}  \diff t  \Big] \notag
%=& E\Big[\widehat{A}(T)(Y(T)-\widehat{Y}(T))\Big]-E\Big[\int_0^T\widehat{A}(t) (\diff Y(t)-\diff  \widehat{Y}(t))\Big.\notag\\
%& +\int_0^T(Y(t^-)-\widehat{Y}(t^-))\diff \widehat{A}(t) + \int_0^T(Z(t)-\widehat{Z}(t))\dfrac{\partial \widehat{H}}{\partial z}(t) \diff t \notag\\
%&\Big. +\int_0^T\int_{\mathbb{R}_0}(K(t,\zeta)-\widehat{K}(t,\zeta))\nabla_k \widehat{H}(t,\zeta)\nu_\alpha(\diff \zeta)\diff t+\int_0^T \sum_{j=1}^D\dfrac{\partial \widehat{H}}{\partial v^j}(t)  (V^j(t)-\widehat{V}^j(t) )\lambda_{j}(t)    \diff t    \Big]\notag\\
\end{align}
\begin{align}
\leq& E\Big[\widehat{A}(T)\dfrac{\partial h}{\partial x}( \widehat{X}(T),\alpha(T))(X(T)-\widehat{X}(T)) \Big]-E\Big[\int_0^T\Big\{\dfrac{\partial \widehat{H}}{\partial y}(t) (Y(t)-\widehat{Y}(t)) \Big.\notag\\
+&\widehat{A}(t) (-g(t)+\widehat{g}(t)) + (Z(t)-\widehat{Z}(t))\dfrac{\partial \widehat{H}}{\partial z}(t) \notag\\
\Big.+&\int_{\mathbb{R}_0}(K(t,\zeta)-\widehat{K}(t,\zeta))\nabla_k \widehat{H}(t,\zeta)\nu_\alpha(d\zeta) +  \sum_{j=1}^D\dfrac{\partial \widehat{H}}{\partial v^j}(t)  (V^j(t)-\widehat{V}^j(t) )\lambda_{j}(t)  \Big\}   \diff t  \Big].
\end{align}
Summing \eqref{eqI12}-\eqref{eqI32} up, we have
\begin{align}
I_1+I_2+I_3\leq & E\Big[ \int_0^T \Big\{H(t)- \widehat{H}(t)-\dfrac{\partial \widehat{H}}{\partial x}(t)(X(t)-\widehat{X}(t)) -\dfrac{\partial \widehat{H}}{\partial y}(t)(Y(t)-\widehat{Y}(t)) \Big. \Big. \notag\\
&\Big. \Big. +\int_{\mathbb{R}_0}(K(t,\zeta)-\widehat{K}(t,\zeta))\nabla_k \widehat{H}(t,\zeta)\nu_\alpha(d\zeta)\notag\\
&+ \sum_{j=1}^D\dfrac{\partial \widehat{H}}{\partial v^j}(t)  (V^j(t)-\widehat{V}^j(t) )\lambda_{j}(t) \Big\} \diffns t\Big].\label{eqdiff2}
\end{align}
One can show, using the same arguments in \cite{FOS05} (see also \cite{ZES2012}) that, the right hand side of \eqref{eqdiff2} is non-positive.  This completed the proof.
\end{proof}

\subsection{An equivalent maximum principle}

In this section, we shall show a version of maximum principle which does not require concavity condition. We shall call it an equivalent maximum principle. Let us make the following assumptions

\begin{asum}\label{assumces1}
For all $t_0\in [0,T]$ and all bounded $\mathcal{E}_t$-measurable random variable $\theta(\omega)$, the control process $\beta(t)$ defined by
\begin{align}\label{eqbeta1}
\beta(t):= \chi_{]t_0,T[}(t)\theta(\omega);\,\,t\in [0,T], \text{ belongs to } \mathcal{A}_{\mathcal{E}},
\end{align}
\end{asum}

\begin{asum}\label{assumces2}
For all $u \in \mathcal{A}_{\mathcal{E}}$ and all bounded $\beta \in \mathcal{A}_{\mathcal{E}}$, there exists $\delta>0$ such that
\begin{align}\label{eqbeta2}
\widetilde{u}(t):=u(t)+\ell \beta(t) \in \mathcal{A_{\mathcal{E}}};\,\,t\in [0,T] , \text{ belongs to } \mathcal{A}_{\mathcal{E}} \text{ for all } \ell \in ]-\delta,\delta[.
\end{align}
%belongs to $\mathcal{A}_{\mathcal{E}}$ for all $\ell \in ]-\delta,\delta[$.
\end{asum}

\begin{asum}\label{assumces3}
For all bounded $\beta \in \mathcal{A}_{\mathcal{E}}$, the derivatives processes
\begin{align*}
x_1(t)=\dfrac{\diffns}{\diffns \ell }X^{(u+\ell \beta)}(t)\Big. \Big|_{\ell=0};& \,\,\,\,y_1(t)=\dfrac{\diffns}{\diffns \ell }Y^{(u+\ell \beta)}(t)\Big. \Big|_{\ell=0}
;\\
z_1(t)=\dfrac{\diffns}{\diffns \ell }Z^{(u+\ell \beta)}(t)\Big. \Big|_{\ell=0}; \,\,\,\,&k_1(t)=\dfrac{\diffns}{\diffns \ell }K^{(u+\ell \beta)}(t,\cdot)\Big. \Big|_{\ell=0};\\
\,\,\,\,\,\,\,\,v_1^j(t)=\dfrac{\diffns}{\diffns \ell }V^{j,(u+\ell \beta)}(t)\Big. \Big|_{\ell=0},\,\,\,j=1,\ldots, D&
\end{align*}
exist and belong to $L^2(\lambda \times P)$.
\end{asum}
In the following, we write $\dfrac{\partial b}{\partial x}(t)$ for $\dfrac{\partial b}{\partial x}(t,X(t),\alpha(t),u(t))$, etc.  It follows from \eqref{eqstateprocess1} and \eqref{eqassBSDE} that

\begin{equation}\left\{
\begin{array}{llll}
\diffns x_1(t) & =& \Big\{\dfrac{\partial b}{\partial x}(t)x_1(t) +\dfrac{\partial b}{\partial u}(t)\beta(t)\Big\}\diffns t
+\Big\{x_1(t)\dfrac{\partial \sigma}{\partial x}(t)+ \dfrac{\partial \sigma}{\partial u}(t)\beta(t)\Big\}\diffns B(t)\\
&& +\displaystyle  \int_{\mathbb{R}_0}\Big\{ \dfrac{\partial \gamma}{\partial x}(t,\zeta) x_1(t)+ \dfrac{\partial \gamma}{\partial u}(t,\zeta)\beta(t)\Big\}\widetilde{N}_\alpha(\diffns t,\diffns \zeta)\\
&& +\Big\{\dfrac{\partial \eta}{\partial x}(t) x_1(t)   +\dfrac{\partial \eta}{\partial u}(t)\beta(t)\Big\}\cdot\diffns \widetilde{\Phi}(t);\,\,t\in[0,T] \\
x_1(t)&=&0.\label{derivstate1}
\end{array}\right.
\end{equation}
and
\begin{equation}\left\{
\begin{array}{llll}
\diffns y_1(t)&=&-\Big\{\dfrac{\partial g}{\partial x}(t)x_1(t)+\dfrac{\partial g}{\partial y}(t)y_1(t)+\dfrac{\partial g}{\partial z}(t)z_1(t)+\displaystyle  \int_{\mathbb{R}_0}\nabla_k g (t)k_1(t,\zeta)\nu_\alpha(\diffns \zeta)\Big. \\
&&+\sum_{j=1}^D \dfrac{\partial g}{\partial v^j}(t)v_1^j(t)\lambda_j(t)+\dfrac{\partial g}{\partial u}(t)\beta(t)\Big\} \diffns t +z_1(t)\diff  B(t)  \\
&&+\displaystyle \int_{\mathbb{R}_0}k_1(t,\zeta) \widetilde{N}_\alpha(\diffns \zeta, \diffns t) + v_1(t)\cdot\diffns \widetilde{\Phi}(t) ;\,\,\,t\in [0,T]\\
y_1(T)&=&\dfrac{\partial h}{\partial x}(X(T),\alpha(T))x_1(T) .\label{derivassBSDE1}
\end{array}\right.
\end{equation}

\begin{rem}\label{rem111}
As for sufficient conditions for the existence and uniqueness of solutions \eqref{derivstate1} and \eqref{derivassBSDE1}, the reader may consult \cite[(4.1)]{Pen93}

As an example, a set of sufficient conditions under which \eqref{derivstate1} and \eqref{derivassBSDE1} admit a unique solution is as follows:

\begin{enumerate}
\item Assume that the coefficients $b,\sigma, \gamma, \eta, g, f, \psi$ and $\phi$ are continuous with respect to their arguments and are continuously differentiable  with respect to $(x,y,z,k,v,u)$. (Here, the dependence of $g$ and $f$ on $k$ is trough $\int_{\mathbb{R}_0}k(\zeta)\rho(t,\zeta)\nu(\diffns \zeta)$, where $\rho$ is a measurable function satisfying $0\leq \rho(t,\zeta)\leq c(1\wedge |\zeta|), \text{  } \forall \zeta\in\mathbb{R}_0$. Hence the differentiability in this argument is in the Fr\'echet sense.)

\item The derivatives of $b,\sigma, \gamma, \eta$ and $g$ are bounded.

\item The derivatives of $f$ are bounded by $C(1+|x|+|y|+(\int_{\mathbb{R}_0}|k(.,\zeta)|^2\nu(\diffns \zeta))^{1\backslash 2}+|v|+|u|)$.

\item The derivatives of $\psi$ and $\phi$ with respect to $x$ are bounded by $C(1+|x|).$
\end{enumerate}

\end{rem}

\begin{thm}[Equivalent Maximum Principle]\label{theomainneccon1}
Let $u\in \mathcal{A}_{\mathcal{E}}$ with corresponding solutions $X(t)$ of \textup{\eqref{eqstateprocess1}}, $(Y(t),Z(t),K(t,\zeta),V(t))$ of \textup{\eqref{eqassBSDE}}, $A(t)$ of \textup{\eqref{lambda1}}, $(p(t),q(t),r(t,\zeta),w(t))$ of \textup{\eqref{ABSDE1}} and corresponding derivative processes $x_1(t)$ and $(y_1(t),z_1(t),k_1(t,\zeta),v_1(t))$ given by \textup{\eqref{derivstate1}} and \textup{\eqref{derivassBSDE1}} respectively. Suppose that
\textup{Assumptions \ref{assumces1}, \ref{assumces2}} and \textup{\ref{assumces3}} hold. Moreover, assume the following growth conditions
 \begin{align}
&E\Big[\int_0^T p^2(t)\Big\{\Big(\dfrac{\partial \sigma}{\partial x}\Big)^2(t)x^2_1(t) +\Big(\dfrac{\partial \sigma}{\partial u}\Big)^2(t)\beta^2(t) +\int_{\mathbb{R}_0}\Big( \Big(\dfrac{\partial \gamma}{\partial x}\Big)^2(t,\zeta)x_1^2(t)+\Big(\dfrac{\partial \gamma}{\partial u}\Big)^2(t,\zeta)\beta^2(t)\Big)\nu_\alpha (\diffns \zeta) \Big.\notag\\
%&\Big.+\int_{\mathbb{R}_0}\Big( \Big(\dfrac{\partial \gamma}{\partial x}\Big)^2(t,\zeta)x_1^2(t)+\Big(\dfrac{\partial \gamma}{\partial u}\Big)^2(t,\zeta)\beta^2(t)\Big)\nu_\alpha (\diffns \zeta)\Big.\notag\\
&\Big.+\sum_{j=1}^D \Big(\Big(\dfrac{\partial \eta^j}{\partial x}\Big)^2(t)x^2_1(t)+\Big(\dfrac{\partial \eta^j}{\partial u}\Big)^2(t)\beta^2(t)\Big)\lambda_j(t)\Big\} \diffns t \notag\\
&+\int_0^Tx_1^2(t)\Big\{ q^2(t)+\int_{\mathbb{R}_0}r^2(t,\zeta)\nu_\alpha(\diffns \zeta)+\sum_{j=1}^D  (\eta^j)^2(t)\lambda_j(t)\Big\}\diffns t\Big] <\infty \label{thneccond1},
\end{align}
and
\begin{align}
&E\Big[\int_0^Ty_1^2(t)  \Big\{ (\dfrac{\partial H}{\partial z})^2 (t) \,+\,\int_{\mathbb{R}_0} \|\nabla_k H\|^2(t,\zeta)\nu_\alpha (\diffns\zeta)+  \sum_{j=1}^D(\dfrac{\partial H}{\partial v^j})^2 (t) \lambda_j(t)\Big\}\diffns t\notag\\
&+ \int_0^TA^2(t)\Big\{z_1^2(t)+\int_{\mathbb{R}_0}k_1^2(t,\zeta)\nu_\alpha (\diffns \zeta)+ \sum_{j=1}^D (v^j_1)^2(t)\lambda_j(t)\Big\}\diffns t\Big]<\infty.
\label{thneccond12}
\end{align}

Then the following are equivalent:

\textup{(1)} $\dfrac{\diffns}{\diffns \ell}J^{(u+\ell \beta)}(t)\Big. \Big|_{\ell=0}=0 \text{ for all bounded } \beta\in \mathcal{A}_{\mathcal{E}}.$

\textup{(2)} $E\Big[\dfrac{\partial H}{\partial u} (t,X(t),\alpha(t),Y(t), Z(t), K(t,\cdot),V(t),u,A(t),p(t),q(t),r(t,\cdot),w(t))_{u=u(t)} \Big. \Big| \mathcal{E}_t\Big]=0$ for a.a. $ t \in [0,T].$

\end{thm}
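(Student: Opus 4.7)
The strategy is the usual variational one, adapted to the regime‑switching FBSDEJ setting: differentiate the performance functional in the perturbation direction $\beta$, rewrite the resulting integrand in terms of the Hamiltonian $H$ by integrating the adjoint equations against the first‑variation equations, and finally read off the equivalence from the arbitrariness of $\beta$. Concretely, I would begin by computing, under the integrability bounds \eqref{thneccond1}--\eqref{thneccond12}, the Gâteaux derivative
\begin{align*}
\frac{d}{d\ell}J^{(u+\ell\beta)}\Big|_{\ell=0}
=&\,E\Big[\int_0^T\Big\{\tfrac{\partial f}{\partial x}x_1+\tfrac{\partial f}{\partial y}y_1+\tfrac{\partial f}{\partial z}z_1+\langle\nabla_kf,k_1\rangle+\sum_{j=1}^D\tfrac{\partial f}{\partial v^j}v_1^j+\tfrac{\partial f}{\partial u}\beta\Big\}dt\\
&\qquad+\tfrac{\partial\varphi}{\partial x}(X(T),\alpha(T))\,x_1(T)+\psi'(Y(0))\,y_1(0)\Big],
\end{align*}
where the processes $(x_1,y_1,z_1,k_1,v_1)$ satisfy \eqref{derivstate1}--\eqref{derivassBSDE1}. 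Assumption \ref{assumces3} ensures this derivative exists and the interchange of $d/d\ell$ with expectation is justified.

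The core step is then two applications of It\^o's product formula. First, I would apply the product rule to $\widehat p(t)x_1(t)$ using \eqref{ABSDE1} and \eqref{derivstate1}, taking care to include the compensated‑Poisson cross‑variation $\int_{\mathbb{R}_0}r(t,\zeta)(\partial_x\gamma\,x_1+\partial_u\gamma\,\beta)\nu_\alpha(d\zeta)$ and the Markov‑chain cross‑variation $\sum_{j=1}^D w^j(t)(\partial_x\eta^j\,x_1+\partial_u\eta^j\,\beta)\lambda_j(t)$; the growth condition \eqref{thneccond1} guarantees that the local‑martingale parts are true martingales and so vanish in expectation. This identifies $E[p(T)x_1(T)]$ with an integral against $-\partial_xH$ plus $b,\sigma,\gamma,\eta$‑perturbation terms. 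Second, I would apply the product rule to $A(t)y_1(t)$ using \eqref{lambda1} and \eqref{derivassBSDE1}, producing, via \eqref{thneccond12}, an identity expressing $E[A(T)y_1(T)]-E[A(0)y_1(0)]$ in terms of integrals of $\partial_yH,\partial_zH,\nabla_kH,\partial_{v^j}H$ paired with $(y_1,z_1,k_1,v_1^j)$ and of $A\cdot(\partial_xg\,x_1+\partial_ug\,\beta+\ldots)$.

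Using the boundary conditions $\widehat p(T)=\partial_x\varphi+\widehat A(T)\partial_xh$ and $\widehat A(0)=\psi'(\widehat Y(0))$ together with $y_1(T)=\partial_xh\,x_1(T)$, I can substitute both identities back into the expression for $dJ/d\ell$. Every term containing $x_1,y_1,z_1,k_1,v_1$ groups into the corresponding partial derivative of $H$ (by the definition \eqref{Hamiltoniansddg1}) and, after cancellation via the adjoint equations, collapses; only the explicit $\beta$‑pieces survive, and those reassemble to $\partial H/\partial u$. The outcome is the clean identity
\begin{equation*}
\frac{d}{d\ell}J^{(u+\ell\beta)}\Big|_{\ell=0}=E\Big[\int_0^T\tfrac{\partial H}{\partial u}(t)\,\beta(t)\,dt\Big].
\end{equation*}

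The equivalence is then a standard localization argument. For the implication (2)$\Rightarrow$(1), iterated conditioning with respect to $\mathcal{E}_t$ removes $\beta$ (which is $\mathcal{E}_t$-predictable by Assumption \ref{assumces2}). For (1)$\Rightarrow$(2), Assumption \ref{assumces1} allows me to choose $\beta(t)=\chi_{]t_0,t_0+h[}(t)\theta$ with $\theta$ bounded and $\mathcal{E}_{t_0}$‑measurable; dividing by $h$, letting $h\downarrow0$, and invoking Lebesgue differentiation together with the arbitrariness of $\theta$ yields $E[\partial_uH(t_0)\mid\mathcal{E}_{t_0}]=0$ for a.e.\ $t_0$. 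The main obstacle I foresee is purely bookkeeping: tracking all four noise sources ($dt$, $dB$, $\widetilde N_\alpha$, $d\widetilde\Phi$) through the two It\^o product expansions and verifying, via \eqref{thneccond1}--\eqref{thneccond12}, that each stochastic integral is a genuine martingale so that only the drift terms contribute on taking expectations.
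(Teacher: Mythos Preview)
Your proposal is correct and follows essentially the same route as the paper: differentiate $J$, apply It\^o's product rule to $p(t)x_1(t)$ and to $A(t)y_1(t)$, use the terminal/initial conditions $p(T)=\partial_x\varphi+A(T)\partial_xh$, $A(0)=\psi'(Y(0))$, $y_1(T)=\partial_xh\,x_1(T)$ to substitute back, and observe that all the $(x_1,y_1,z_1,k_1,v_1)$ terms cancel by the definition of $H$, leaving $E\!\big[\int_0^T\partial_uH(t)\beta(t)\,dt\big]$. The only cosmetic difference is in the localization: the paper takes $\beta(t)=\theta\,\chi_{[t_0,T]}(t)$ and differentiates in $t_0$, whereas you take $\beta(t)=\theta\,\chi_{]t_0,t_0+h[}(t)$ and pass to the Lebesgue point; both are standard and both are covered by Assumption~\ref{assumces1} (via linear combinations).
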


\begin{proof}
We have that
\begin{align}
&\dfrac{\diffns }{\diffns \ell}J^{(u+\ell \beta)}(t)\Big. \Big|_{\ell=0}\notag\\
=&E\Big[\int_0^T \Big\{\dfrac{\partial f}{\partial x}(t)x_1(t)  +\dfrac{\partial f}{\partial y}(t)y_1(t)+\dfrac{\partial f}{\partial z}(t) z_1(t)+\int_{\mathbb{R}_0}\nabla_k f(t)k_1(t,\zeta)\nu_\alpha(\diffns \zeta)\Big. \label{eqIprim11}\\
&\Big.+ \sum_{j=1}^D \dfrac{\partial f}{\partial v^j}(t)v_1^j(t)\lambda_j(t)+ \dfrac{\partial f}{\partial u}(t)\beta(t)\Big\} \diffns t+\dfrac{\partial \varphi}{\partial x}  (X(T),\alpha(T))x_1(T)+\psi^\prime(Y(0))y_1(0)\Big].\notag
%=&J_1+J_2+J_3,
\end{align}
By  \eqref{ABSDE1}, the It\^o formula, \eqref{derivstate1} and \eqref{thneccond1}, we have
\begin{align}
&E\Big[\dfrac{\partial \varphi}{\partial x}(X(T),\alpha(T))x_1(T)\Big] \notag\\
=&E\Big[p(T)X(T)\Big]- E\Big[\dfrac{\partial h}{\partial x}(X(T),\alpha(T))A(T)x_1(T)\Big]   \label{eqIprim2} \\
%=&E\Big[p(T)X(T)\Big]- E\Big[\dfrac{\partial h}{\partial x}(X(T),\alpha(T))A(T)x_1(T)\Big]  \notag\\
%=&E\bigg[\int_0^T\bigg\{p_1(t)dX_1(t)+X_1(t^-)dp_1(t)+q_1(t)\bigg(\dfrac{\partial \sigma}{\partial a}(t)X_1(t)+\dfrac{\partial \sigma}{\partial a_1}(t)X_1(t-\delta)\bigg.\bigg.\notag\\
%&\bigg.+\dfrac{\partial \sigma}{\partial a_2}(t) \int_{t-\delta}^{t}e^{\rho(t-r)} X_1(r)\,dr+\dfrac{\partial \sigma}{\partial u_1}(t)\beta_1(t)\bigg)dt+\int_{\mathbb{R}_0} r_1(t,\zeta)\bigg(\dfrac{\partial \gamma}{\partial a}(t,\zeta)X_1(t)\bigg.\notag\\
%&\bigg.\bigg.\bigg.+\dfrac{\partial \gamma}{\partial a_1}(t,\zeta)X_1(t-\delta)+\dfrac{\partial \gamma}{\partial a_2}(t,\zeta) \int_{t-\delta}^{t}e^{\rho(t-r)} X_1(r)\,dr+\dfrac{\partial \gamma}{\partial u_1}(t,\zeta)\beta_1(t)\bigg) \nu(d\zeta)dt\bigg\}\bigg]\notag\\
%&- E\bigg[h_1^\prime(A^{(u_1,\widehat{u}_2)}(T))\lambda_1(T)X_1(T)\bigg] \notag\\
=&E\Big[\int_0^T\Big\{p(t)\Big(\dfrac{\partial b}{\partial x}(t)x_1(t)+\dfrac{\partial b}{\partial u}(t)\beta(t)\Big)-x_1(t) \dfrac{\partial H}{\partial x}(t)\Big.\Big.\notag\\
&  +q(t)\Big(\dfrac{\partial \sigma}{\partial x}(t)x_1(t)+\dfrac{\partial \sigma}{\partial u}(t)\beta(t)\Big)+\int_{\mathbb{R}_0} r(t,\zeta)\Big(\dfrac{\partial \gamma}{\partial x}(t,\zeta)x_1(t) +\dfrac{\partial \gamma}{\partial u}(t,\zeta)\beta(t)\Big) \nu_\alpha(\diffns \zeta)\bigg.\notag\\
& +\sum_{j=1}^Dw^j(t)\Big(\dfrac{\partial \eta^j}{\partial x}(t)x_1(t)+\dfrac{\partial \eta^j}{\partial u}(t)\beta(t) \Big)\lambda_{j}(t) \Big\}\diffns t\Big]- E\Big[\dfrac{\partial h}{\partial x}(X(T),\alpha(T))A(T)x_1(T)\Big].\notag
\end{align}
By  \eqref{lambda1}, the It\^o formula, \eqref{derivassBSDE1} and \eqref{thneccond12}, we get
\begin{align}
&E\Big[\psi^\prime(Y(0))y_1(0)\Big] \notag\\
=&E\Big[A(0)y_1(0)\Big]\notag\\
=&E\Big[A(T)y_1(T)\Big] - E\Big[\int_0^T\Big\{A(t^-)\diff y_1(t)+y_1(t^-)\diff A(t) +\dfrac{\partial H}{\partial z}(t)z_1(t)\diff t\Big.\Big.\notag\\
&\Big.\Big.+\int_{\mathbb{R}_0}\nabla_kH(t,\zeta)k_1(t,\zeta)\nu_\alpha(\diffns  \zeta)\diff t+\sum_{j=1}^D\dfrac{\partial H}{\partial v^j}(t)  v^j_1(t)\lambda_{j}(t) \diff t  \Big\}\Big]\notag\\
=&E\Big[\dfrac{\partial h}{\partial x}(X(T),\alpha(T))A(T)x_1(T)+\int_0^T\Big\{A(t)\Big(\dfrac{\partial g}{\partial x}(t)x_1(t)  +\Big.\dfrac{\partial g}{\partial y}(t)y_1(t) +\dfrac{\partial g}{\partial z}(t)z_1(t) \Big.\Big.\Big. \notag\\
& +\int_{\mathbb{R}_0}\nabla_k g (t,\zeta)k_1(t,\zeta)\nu_\alpha(\diffns \zeta)   + \sum_{j=1}^D\dfrac{\partial g}{\partial v^j}(t)  v^j_1(t)\lambda_{j}(t)    +\dfrac{\partial g}{\partial u}(t)\beta(t)   \Big)-\dfrac{\partial H}{\partial y}(t)y_1(t) \notag\\
&   -\dfrac{\partial H}{\partial z}(t)z_1(t)-\int_{\mathbb{R}_0}\nabla_kH(t,\zeta)k_1(t,\zeta)\nu_\alpha(\diffns \zeta) -\sum_{j=1}^D\dfrac{\partial H}{\partial v^j}(t)  v^j_1(t)\lambda_{j}(t) \Big\}\diffns  t\Big] .\label{eqIprim3}
\end{align}
Substituting \eqref{eqIprim2} and \eqref{eqIprim3} into \eqref{eqIprim11}, we get
\begin{align}
&\dfrac{\diffns }{\diffns \ell}J^{(u+\ell \beta)}(t)\Big. \Big|_{\ell=0}\notag\\
=&E\Big[\int_0^T\Big( x_1(t)  \Big\{\dfrac{\partial f}{\partial x}(t) +A(t)\dfrac{\partial g}{\partial x}(t)+p(t)\dfrac{\partial b}{\partial x}(t)+q(t)\dfrac{\partial \sigma}{\partial x}(t)+\int_{\mathbb{R}_0} r(t,\zeta)\dfrac{\partial \gamma}{\partial x}(t,\zeta)\nu_\alpha(\diffns  \zeta)\notag\\
&+\sum_{j=1}^Dw^j(t)\dfrac{\partial \eta^j}{\partial x}(t)\lambda_{j}(t) -\dfrac{\partial H}{\partial x}(t)\Big\}+ y_1(t)  \Big\{\dfrac{\partial f}{\partial y}(t) +A(t)\dfrac{\partial g}{\partial y}(t)-\dfrac{\partial H}{\partial y}(t)\Big\}  \notag\\
&+z_1(t)  \Big\{\dfrac{\partial f}{\partial z}(t) +A(t)\dfrac{\partial g}{\partial z}(t)-\dfrac{\partial H}{\partial z}(t)\Big\} \notag
\end{align}
\begin{align}
&+\int_{\mathbb{R}_0}k_1(t,\zeta) \Big\{\nabla_kf(t,\zeta)+A(t)\nabla_kg(t,\zeta)-\nabla_kH(t,\zeta)\Big\}\nu_\alpha(\diffns \zeta) \notag\\
&+ \sum_{j=1}^Dv^j_1(t)  \Big\{\dfrac{\partial f}{\partial v^j}(t) +A(t)\dfrac{\partial g}{\partial v^j}(t)-\dfrac{\partial H}{\partial v^j}(t)\Big\} \notag\\
&+\beta(t)  \Big\{\dfrac{\partial f}{\partial u}(t) +A(t)\dfrac{\partial g}{\partial u}(t)+p(t)\dfrac{\partial b}{\partial u}(t)+q(t)\dfrac{\partial \sigma}{\partial u}(t)+\int_{\mathbb{R}_0} r(t,\zeta)\dfrac{\partial \gamma}{\partial u}(t,\zeta)\nu_\alpha(\diffns  \zeta)\notag\\
&+\sum_{j=1}^Dw^j(t)\dfrac{\partial \eta^j}{\partial u}(t)\lambda_{j}(t) \Big\}\Big)\diffns t\Big]. \label{eqIprim311}
\end{align}
By the definition of $H$, the coefficients of $x_1(t),y_1(t),z_1(t), k_1(t,\zeta)$ and $v_1(t)$ are all equal to zero in \eqref{eqIprim311}. Hence, if
$$\dfrac{\diffns }{\diffns \ell}J^{(u+\ell \beta)}(t)=0 \text{ for all bounded } \beta \in \mathcal{A}_{\mathcal{E}},$$
 it follows that
$$E\Big[\displaystyle \int_0^T\dfrac{\partial H}{\partial u}(t)\beta(t) \diff t \Big]=0 \text{  for all bounded } \beta \in \mathcal{A}_{\mathcal{E}}.$$
This holds in particular for $\beta \in \mathcal{A}_{\mathcal{E}}$ of the form $
\beta(t)=\beta_{t_0}(t,\omega)=\theta(\omega)\xi_{[t_0,T]}(t)$ for a fix $t_0\in [0,T)$, where $\theta(\omega)$ is a bounded $\mathcal{E}_{t_0}$-measurable random variable. Hence
$$
E\Big[\displaystyle \int_{t_0}^T\dfrac{\partial H}{\partial u}(t)\diff t\,\theta\Big]=0.
$$
Differentiating with respect to $t_0$, we have
$$
E\Big[\dfrac{\partial H}{\partial u}(s)\,\theta\Big]=0 \text{ for a.a., } t_0.
$$
Since the equality is true for all bounded $\mathcal{E}_{t_0}$-measurable random variable, we conclude that
$$
E\Big[\dfrac{\partial H}{\partial u}(t_0)|\mathcal{E}_{t_0}\Big]=0 \text{ for a.a., } t_0\in[0,T].
$$
This shows that (1) $\Rightarrow$ (2).

Conversely, using the fact that every bounded $\beta \in \mathcal{A}_{\mathcal{E}}$  can be approximated by a linear combinations of controls $\beta(t)$ of the form \eqref{eqbeta1}, the above argument can be reversed to show that  (2) $\Rightarrow$ (1).
\end{proof}

\section{A Malliavin calculus approach}\label{mallcalappro}
In this section, we shall give a method based on Malliavin calculus. This method was first introduced in \cite{MOZ12} when the state process is given by a SDE and extended in the stochastic partial differential equation (SPDE) case in \cite{MMPS13}. The set up is that of a Markov regime-switching forward-backward stochastic differential equations with jumps as in the previous sections and the notation are the same. For basic concepts of Malliavin calculus, we refere the reader to  \cite{DOP08, Nua06}.

In the following, let denote by $D^B_{t}F$ (respectively $D^{\widetilde{N}_\alpha}_{t,\zeta} F$ and $D^{\widetilde{\Phi}}_t F$ the Malliavin derivative in the direction of the Brownian motion $B$(respectively pure jump L\'evy process $\widetilde{N}_\alpha$ and the pure jump process $\widetilde{\Phi}$) of a given (Malliavin differentiable) random variable $F=F(\omega);\,\,\,\omega \in \Omega$. We denote by $\mathbb{D}_{1,2}$ the set of all random variables which are Malliavin differentiable with respect to $B(\cdot),\,\widetilde{N}_\alpha(\cdot,\cdot)$ and $\widetilde{\Phi}(\cdot)$. A crucial argument in the proof of our general maximum principle rests on duality formulas for the Malliavin derivatives $%
D_{t}$ and $D_{t,\zeta}$ (see for e.g., \cite{Nua06} and \cite{DOP08}):

\begin{align}
E\Big[ F\int_{0}^{T}\varphi (t)\diffns B(t)\Big] =&E\Big[
\int_{0}^{T}\varphi (t)D^B_{t}F\diffns  t\Big] , \label{dualformB1}\\
E\Big[ F\int_{0}^{T}\int_{\mathbb{R}_{0}}\psi (t,\zeta)\widetilde{N}_\alpha%
(\diffns t,\diffns \zeta)\Big] =&E\Big[ \int_{0}^{T}\int_{\mathbb{R}_{0}}\psi
(t,\zeta)D^{\widetilde{N}_\alpha}_{t,\zeta}F\nu_\alpha (\diffns \zeta)\diffns t\Big] ,\label{dualformN1}\\
E\Big[ F\int_{0}^{T}\varphi (t)\diffns \widetilde{\Phi}(t)%
\Big] =&E\Big[ \int_{0}^{T}\varphi
(t)D^{\widetilde{\Phi}}_{t}F\lambda\diffns t\Big],\label{dualformPhi1}
\end{align}
true for all Malliavin differentiable random variable $F$ and $\mathcal{F}_t$-predictable processes $\varphi$ and $\psi$ such that the the integrals on the right hand side converge absolutely.

We shall also need some basic properties of the Malliavin derivatives. Let $F \in \mathbb{D}_{1,2}$ be a $\mathcal{F}_s$-measurable random variable, then $
D^B_{t}F=D^{\widetilde{N}_\alpha}_{t,\zeta}F=D^{\widetilde{\Phi}}_{t}F=0 \text{ for all } t>s.$ We also have the following results known as the fundamental theorems of calculus
\begin{align}
D^B_s\Big(\int_0^t\varphi (s)\diff B(s) \Big)=&\varphi (s)1_{[0,t]}(s) +\int_s^tD_s\varphi (r)\diff B(r),\label{fundthm1}\\
D^{\widetilde{N}_\alpha}_{s,\zeta}\Big(\int_0^t \int_{ \mathbb{R}_0} \psi (s,\zeta)\widetilde{N}(\diffns s,\diffns \zeta)\Big)=&\psi (s,\zeta)1_{[0,t]}(s) +\int_s^t\int_{ \mathbb{R}_0} D^{\widetilde{N}}_{s,\zeta}  \psi (r,\zeta)\widetilde{N}_\alpha(\diffns r,\diffns \zeta)  ,\label{fundthm2}\\
D^{\widetilde{\Phi}}_{s}\Big(\int_0^t  \varphi (s)\diffns \widetilde{\Phi}( s)\Big)=&\varphi (s)1_{[0,t]}(s) +\int_s^t D^{\widetilde{\Phi}}_{s}  \varphi (r)\diffns \widetilde{\Phi}( r)  \label{fundthm3},
\end{align}
under the assumption that all the terms involved are well defined and belong to $\mathbb{D}_{1,2}$.

In view of the optimization problem \eqref{prob111}, we define the following processes: Suppose that for all $u\in $ $\mathcal{A}_{\mathcal{E}}$ the
processes
\begin{align}
\kappa(t):=& \nabla_xh (X (T),\alpha (T))\widetilde{A}(T)+ \nabla_x\varphi(X (T),\alpha (T)) \notag\\
&+
 \int_{t}^{T}\frac{\partial f }{\partial x }(s,X(s),\alpha(s),Y(s), Z(s), K(s,\cdot),V(s),u(s)) \diffns s, \label{eqKappa}
\end{align}
\begin{align}
 H_{0}\left(t,x,e_i,y,z,k,v,u,\widetilde{a},\kappa \right):= & \widetilde{a} g(t,x,e_i,y,z,k,v,u) + \kappa(t)b(t,x,e_i,u) +D_t^B\kappa(t)\sigma (t,x,e_i,u), \notag\\
+&\int_{\mathbb{R}_0}D_{t,\zeta}^{\widetilde{N}}\kappa(t)\gamma(t,x,e_i,u,\zeta)\nu_i(\diffns \zeta) +\sum_{j=1}^DD_{t}^{\widetilde{\Phi_j}}\kappa(t)\eta^j(t,x,e_i,u)\lambda_{ij} \label{eqH0}
\end{align}
\begin{align}
F(T):=&  \dfrac{\partial h}{\partial x} (X (T),\alpha (T))\tilde{A}(T)+ \dfrac{\partial \varphi}{\partial x} (X (T),\alpha (T))\label{eqcappsi}\\
\Theta(t,s):=& \frac{\partial H_{0}}{\partial x }(s)G(t,s)\label{eqcaptheta},
 \end{align}
\begin{align}
G(t,s):=&\exp\Big(\int_t^s\Big\{\frac{\partial b}{\partial x }(r)-\frac{1}{2}\Big(\frac{\partial \sigma}{\partial x }(r)\Big)^2  +\int_{\mathbb{R}_0}\Big(\ln \Big(1+\frac{\partial \gamma}{\partial x }\left(r,\zeta \right)\Big)-\frac{\partial \gamma}{\partial x }\left(r,\zeta\right)\Big)\nu_\alpha(\diffns \zeta) \notag\\
+& \sum_{j=1}^D \Big(\ln \Big(1+\frac{\partial \eta^j}{\partial x }(r)\Big)-\frac{\partial \eta^j}{\partial x }(r)\Big) \lambda_j(r) \Big\}\diffns r +\int_t^s\frac{\partial \sigma}{\partial x }\left(r\right)\diffns B(r)\notag\\
+&\int_t^s\int_{\mathbb{R}_0}\ln \Big(1+\frac{\partial \gamma}{\partial x }\left(r,\zeta\right)\Big)\widetilde{N}_\alpha(\diffns \zeta,\diffns r)+\sum_{j=1}^D\int_t^s \ln \Big(1+\frac{\partial \eta^j}{\partial x }(r)\Big)\diffns \widetilde{\Phi_j}(  r)
 \Big.\label{eqG}
\end{align}

  %\begin{align}
%G(t,s):=&\exp\Big(\int_t^s\Big\{\frac{\partial b}{\partial x }(r)-\frac{1}{2}\Big(\frac{\partial \sigma}{\partial x }(r)\Big)^2\Big\}\diffns r +\int_t^s\frac{\partial \sigma}{\partial x }\left(r\right)\diffns B(r) \Big.\notag\\
%& +\int_t^s\int_{\mathbb{R}_0}\ln \Big(1+\frac{\partial \gamma}{\partial x }\left(r,\zeta\right)\Big)\widetilde{N}(\diffns \zeta,\diffns r)+\int_t^s\int_{\mathbb{R}_0}\Big\{\ln \Big(1+\frac{\partial \gamma}{\partial x }\left(r,\zeta \right)\Big)-\frac{\partial \gamma}{\partial x }\left(r,\zeta\right)\Big\}\nu_\alpha(\diffns \zeta)\diffns r\notag\\
%&\Big.+\sum_{j=1}^D\int_t^s \ln \Big(1+\frac{\partial \eta^j}{\partial x }(r)\Big)\diffns \widetilde{\Phi_j}(  r)+ \sum_{j=1}^D\int_t^s \Big\{\ln \Big(1+\frac{\partial \eta^j}{\partial x }(r)\Big)-\frac{\partial \eta^j}{\partial x }(r)\Big\} \lambda_j(r)\diffns r \Big) \label{eqG}%\\
%F(T):=&  \dfrac{\partial h}{\partial x} (X (T),\alpha (T))\tilde{A}(T)+ \dfrac{\partial \varphi}{\partial x} (X (T),\alpha (T))\label{eqcappsi}\\
%\Theta(t,s):=& \frac{\partial H_{0}}{\partial x }(s)G(t,s)\label{eqcaptheta}
%\end{align}
are all well defined.  In \eqref{eqfirstp} and in the following we use the shorthand notation \\$H_0(t)=H_{0}\Big(t,X(t),\alpha(t),Y(t),Z(t),K(t,\cdot),V(t),u,\widetilde{A}(t),\kappa(t) \Big)$. We also assume that the following modified adjoint processes $(\tilde{p}(t),\tilde{q}(t),\tilde{r}(t,\zeta),\tilde{w}(t))$ and $\tilde{A}(t)$ given by
\begin{align}
\tilde{p}(t):= &\kappa(t)+\int_{t}^{T} \frac{\partial H_{0}}{\partial x }%
(s)G(t,s)\diffns s , \label{eqfirstp} \\
\tilde{q}(t):= &D^B_{t}\tilde{p}(t)   , \label{eqfirstq} \\
\tilde{r}(t,\zeta):= &D^{\widetilde{N}_\alpha}_{t,\zeta}\tilde{p}(t)  ,   \label{eqfirstr}\\
\tilde{w}^j(t):=&D_{t}^{\widetilde{\Phi_j}}\tilde{p}(t),\,\,\,j=1,\ldots,D  \label{eqfirstw}
\end{align}
and
\begin{equation}\left\{
\begin{array}{llll}
\diffns  \tilde{A} (t) & = & \dfrac{\partial H}{\partial y} (t) \diff t + \dfrac{\partial H}{\partial z} (t) \diffns B(t)+\displaystyle \int_{\mathbb{R}_0} \dfrac{\diffns \nabla_k H}{\diffns \nu(\zeta)} (t,\zeta) \,\widetilde{N}_\alpha(\diffns \zeta,\diffns t)\\
&& + \nabla_vH(t)\cdot\diffns \widetilde{\Phi}(t);\,\,\,t\in [0,T] \\
A(0) &=& \psi^\prime(Y(0)).\label{lambda111}
\end{array}\right.
\end{equation}
are well defined. Here the general Hamiltonian $H$ is given by \eqref{Hamiltoniansddg1} with $p,q,r,w$ replaced by $\tilde{p}, \tilde{q}, \tilde{r}, \tilde{w}$.
We can now state a general stochastic maximum principle for our control problem  \eqref{prob111}:

\begin{rem}\label{rem11111}
Assume that the coefficients of the control problem satisfy conditions for existence and uniqueness of the system \eqref{eqstateprocess1}-\eqref{eqassBSDE}, assume moreover that there are as in \textup{Remark \ref{rem111}}, then the processes given by \eqref{eqKappa}-\eqref{lambda111} are well defined.
\end{rem}

\begin{thm}\label{theomainneccon1G}
Let $u\in \mathcal{A}_{\mathcal{E}}$ with corresponding solutions $X(t)$ of \textup{\eqref{eqstateprocess1}}, $(Y(t),Z(t),K(t,\zeta),V(t))$ of \textup{\eqref{eqassBSDE}}, $\tilde{A}(t)$ of \textup{\eqref{lambda111}}, $\tilde{p}(t),\tilde{q}(t),\tilde{r}(t,\zeta),\tilde{w}^j(t)$ of \textup{\eqref{eqfirstp}-\eqref{eqfirstw}} and corresponding derivative processes $x_1(t)$ and $(y_1(t),z_1(t),k_1(t,\zeta),v_1(t))$ given by \textup{\eqref{derivstate1}} and \textup{\eqref{derivassBSDE1}} respectively. Suppose that
\textup{Assumptions \ref{assumces1}, \ref{assumces2}} and \textup{\ref{assumces3}} hold. Moreover, assume that the random variables $F(T),\Theta(t,s)$ given by \eqref{eqcappsi} and \eqref{eqcaptheta}, and $\dfrac{\partial f}{\partial x}(t)$ are Malliavin differentiable with respect to $B,\widetilde{N}$ and $\widetilde{\Phi}$. Furthermore, assume the following conditions

 \begin{align}
&E\Big[\int_0^T\Big\{ \Big(\dfrac{\partial \sigma}{\partial x}\Big)^2(t)x^2_1(t) +\Big(\dfrac{\partial \sigma}{\partial u}\Big)^2(t)\beta^2(t) +\int_{\mathbb{R}_0}\Big( \Big(\dfrac{\partial \gamma}{\partial x}\Big)^2(t,\zeta)x_1^2(t)+\Big(\dfrac{\partial \gamma}{\partial u}\Big)^2(t,\zeta)\beta^2(t)\Big)\nu_\alpha (\diffns \zeta)\notag\\
%&+\int_{\mathbb{R}_0}\Big( \Big(\dfrac{\partial \gamma}{\partial x}\Big)^2(t,\zeta)x_1^2(t)+\Big(\dfrac{\partial \gamma}{\partial u}\Big)^2(t,\zeta)\beta^2(t)\Big)\nu_\alpha (\diffns \zeta)\notag\\
&\Big.+\sum_{j=1}^D \Big(\Big(\dfrac{\partial \eta^j}{\partial x}\Big)^2(t)x^2_1(t)+\Big(\dfrac{\partial \eta^j}{\partial u}\Big)^2(t)\beta^2(t)\Big)\lambda_j(t)\Big\} \diffns t\Big] <\infty, \label{thneccond114} \\
&E\Big[\int_0^T\int_0^T\Big\{ \Big(D^B_sF(T)\Big)^2+\int_{\mathbb{R}_0}\Big(D^{\widetilde{N}_\alpha}_{s,\zeta}F(T)\Big)^2 \nu_\alpha(\diffns \zeta)+\sum_{j=1}^D \Big(D^{\widetilde{\Phi}_j}_{s}F(T)\Big)^2\lambda_j(t)\Big\}\diffns s\diff t \Big] <\infty\notag,\\
&E\Big[\int_0^T\int_0^T\Big\{ \Big(D^B_s\Big( \frac{\partial f }{\partial x }(t)\Big)\Big)^2+\int_{\mathbb{R}_0}\Big(D^{\widetilde{N}_\alpha}_{s,\zeta}\Big( \frac{\partial f }{\partial x }(t)\Big)\Big)^2 \nu_\alpha(\diffns \zeta)+\sum_{j=1}^D \Big(D^{\widetilde{\Phi}_j}_{s}\Big( \frac{\partial f }{\partial x }(t)\Big)\Big)^2\lambda_j(t)\Big\}\diffns s\diff t \Big] <\infty ,\notag\\
&E\Big[\int_0^T\int_0^T\Big\{ \Big(D^B_s \Theta(t,s)\Big)^2+\int_{\mathbb{R}_0}\Big(D^{\widetilde{N}_\alpha}_{s,\zeta}\Theta(t,s)\Big)^2 \nu_\alpha(\diffns \zeta)+\sum_{j=1}^D \Big(D^{\widetilde{\Phi}_j}_{s}\Theta(t,s)\Big)^2\lambda_j(t)\Big\}\diffns s\diff t \Big] <\infty \notag.
\end{align}

% \begin{align}
%&E\Big[\int_0^T\int_0^T\Big\{ \Big(D^B_sF(T)\Big)^2+\int_{\mathbb{R}_0}\Big(D^{\widetilde{N}}_{s,\zeta}F(T)\Big)^2 \nu_\alpha(\diffns \zeta)+\sum_{j=1}^D \Big(D^{\widetilde{\Phi}_j}_{s}F(T)\Big)^2\lambda_j(t)\Big\}\diffns s\diff t \Big] <\infty ,\notag\\ %\label{thneccond112}
%&E\Big[\int_0^T\int_0^T\Big\{ \Big(D^B_s\Big( \frac{\partial f }{\partial x }(t)\Big)\Big)^2+\int_{\mathbb{R}_0}\Big(D^{\widetilde{N}}_{s,\zeta}\Big( \frac{\partial f }{\partial x }(t)\Big)\Big)^2 \nu_\alpha(\diffns \zeta)+\sum_{j=1}^D \Big(D^{\widetilde{\Phi}_j}_{s}\Big( \frac{\partial f }{\partial x }(t)\Big)\Big)^2\lambda_j(t)\Big\}\diffns s\diff t \Big] <\infty ,\notag\\ %\label{thneccond113}
%&E\Big[\int_0^T\int_0^T\Big\{ \Big(D^B_s \Theta(t,s)\Big)^2+\int_{\mathbb{R}_0}\Big(D^{\widetilde{N}}_{s,\zeta}\Theta(t,s)\Big)^2 \nu_\alpha(\diffns \zeta)+\sum_{j=1}^D \Big(D^{\widetilde{\Phi}_j}_{s}\Theta(t,s)\Big)^2\lambda_j(t)\Big\}\diffns s\diff t \Big] <\infty \label{thneccond114}.
%\end{align}

Then the following are equivalent:

\textup{(1)} $\dfrac{\diffns}{\diffns \ell }J^{(u+\ell \beta)}(t)\Big. \Big|_{\ell=0}=0 \text{ for all bounded } \beta\in \mathcal{A}_{\mathcal{E}}.$

\textup{(2)} $E\Big[\dfrac{\partial H}{\partial u} (t,X(t),\alpha(t),Y(t), Z(t), K(t,\cdot),V(t),u,A(t),p(t),q(t),r(t,\cdot),w(t))_{u=u(t)}\Big. \Big| \mathcal{E}_t\Big] =0$
for a.a. $(t ,\omega)\in [0,T]\times \Omega$.

\end{thm}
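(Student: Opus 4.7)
The plan is to prove the identity
\[
\frac{\diffns}{\diffns\ell}J^{(u+\ell\beta)}\Big|_{\ell=0} = E\Big[\int_0^T \frac{\partial H}{\partial u}(t)\beta(t)\diffns t\Big]
\]
directly via Malliavin calculus, where $H$ is as in \eqref{Hamiltoniansddg1} but with $(p,q,r,w)$ replaced by the explicit Malliavin-defined quadruple $(\tilde p,\tilde q,\tilde r,\tilde w)$ from \eqref{eqfirstp}--\eqref{eqfirstw}. Once this identity is established, the equivalence $(1)\Leftrightarrow(2)$ follows verbatim from the localization step at the end of the proof of Theorem~\ref{theomainneccon1}: one picks $\beta(t)=\theta(\omega)\chi_{[t_0,T]}(t)$ with $\theta$ bounded and $\mathcal{E}_{t_0}$-measurable, differentiates in $t_0$, and exploits that such $\beta$ are dense in the bounded $\mathcal{E}_t$-predictable processes.

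The starting point is to differentiate \eqref{perfunct1} under the expectation sign (allowed by the growth hypotheses), obtaining the same expression as \eqref{eqIprim11}: a Lebesgue integral of $\frac{\partial f}{\partial x}x_1+\frac{\partial f}{\partial y}y_1+\frac{\partial f}{\partial z}z_1+\int_{\R_0}\nabla_k f\,k_1\nu_\alpha+\sum_j \frac{\partial f}{\partial v^j}v_1^j\lambda_j+\frac{\partial f}{\partial u}\beta$ together with the boundary terms $E[\frac{\partial\varphi}{\partial x}(X(T),\alpha(T))x_1(T)]$ and $E[\psi'(Y(0))y_1(0)]$. The BSDE boundary term is treated exactly as in \eqref{eqIprim3}: applying It\^o's formula to $\tilde A(t)y_1(t)$ via \eqref{lambda111} and \eqref{derivassBSDE1} converts $E[\psi'(Y(0))y_1(0)]=E[\tilde A(0)y_1(0)]$ into $E[\frac{\partial h}{\partial x}(X(T),\alpha(T))\tilde A(T)x_1(T)]$ plus Lebesgue integrals in which the $\frac{\partial H}{\partial y},\frac{\partial H}{\partial z},\nabla_k H,\frac{\partial H}{\partial v^j}$ pieces cancel the corresponding $f$-derivatives by the very definition of $H$.

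The core is the treatment of the forward boundary term. Using Fubini and the definition \eqref{eqKappa} of $\kappa$, the combination
\[
E\Big[\frac{\partial\varphi}{\partial x}(X(T),\alpha(T))x_1(T)\Big]+E\Big[\int_0^T \frac{\partial f}{\partial x}(t)x_1(t)\diffns t\Big]-E\Big[\frac{\partial h}{\partial x}(X(T),\alpha(T))\tilde A(T)x_1(T)\Big]
\]
can be rewritten as $E[\kappa(0)x_1(T)]$ minus a boundary correction. I would then apply an It\^o formula for the \emph{anticipating} integrand $\kappa(t)$ (not $\mathbb{F}$-adapted but Malliavin differentiable in all three directions) against \eqref{derivstate1} for $x_1$, and evaluate the resulting $dB$, $\widetilde N_\alpha$ and $\widetilde\Phi$ integrals by the duality formulas \eqref{dualformB1}--\eqref{dualformPhi1}. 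The outputs are Lebesgue integrals whose integrands pair $D^B_t\kappa(t),\,D^{\widetilde N_\alpha}_{t,\zeta}\kappa(t),\,D^{\widetilde\Phi_j}_t\kappa(t)$ with the $\beta$-coefficients $\frac{\partial\sigma}{\partial u},\frac{\partial\gamma}{\partial u},\frac{\partial\eta^j}{\partial u}$ coming from \eqref{derivstate1}. The remaining $\frac{\partial b}{\partial u}\beta$-type drift, combined with the $\int_t^T \frac{\partial H_0}{\partial x}(s)G(t,s)\diffns s$ piece in \eqref{eqfirstp} via the semigroup property of the fundamental solution $G$ in \eqref{eqG} and a change in the order of integration, collapses into $\tilde p(t)\frac{\partial b}{\partial u}(t)$; differentiating these $G$-convolutions in the three Malliavin directions supplies exactly the extra contributions required to reconstruct $\tilde q,\tilde r,\tilde w$ through \eqref{eqfirstq}--\eqref{eqfirstw}, so that the coefficient of $\beta(t)$ in the final Lebesgue integral is $\frac{\partial H}{\partial u}(t)$ evaluated at the Malliavin-based adjoints.

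The hard part will be the bookkeeping in this last step: three distinct Malliavin derivatives of two factorized random objects ($\kappa$ and $G(t,s)$), each paired with three types of stochastic integrals contributed by $x_1$, must be assembled in the correct order via repeated Fubini exchanges. The integrability and Malliavin-differentiability hypotheses in the statement, namely \eqref{thneccond114} together with the differentiability of $F(T)$, $\frac{\partial f}{\partial x}$ and $\Theta(t,s)$, are precisely calibrated so that every Fubini interchange and every invocation of \eqref{dualformB1}--\eqref{dualformPhi1} is legitimate; crucially, no concavity of the Hamiltonian is needed here because the entire argument is a first-order variational identity rather than a comparison estimate.
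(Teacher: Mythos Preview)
Your proposal diverges from the paper's proof at the crucial step and, as written, contains a real gap.

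The paper does \emph{not} establish the identity $\frac{\diffns}{\diffns\ell}J^{(u+\ell\beta)}\big|_{\ell=0}=E\big[\int_0^T\frac{\partial H}{\partial u}(t)\beta(t)\diffns t\big]$ for general bounded $\beta$ and only afterwards localize. Instead, after the first round of duality (your steps up to what corresponds to \eqref{eqmalth5}) the derivative of $J$ still carries a term $E\big[\int_0^T\frac{\partial H_0}{\partial x}(s)\,x_1(s)\diffns s\big]$ in addition to the $\beta$-term. The paper then \emph{specializes immediately} to the spike perturbation $\beta_\theta=\theta\chi_{(t,t+h]}$, splits the resulting expression as $J_1(h)+J_2(h)=0$, and differentiates with respect to $h$ at $h=0$. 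The point of this maneuver is that for this particular $\beta$ the forcing in \eqref{derivstate1} vanishes on $[t+h,T]$, so one has the exact factorization $x_1(s)=x_1(t+h)G(t+h,s)$ for $s\ge t+h$; a second application of duality to $\Theta(t,s)=\frac{\partial H_0}{\partial x}(s)G(t,s)$ then produces precisely the extra $\int_t^T\Theta(t,s)\diffns s$ and its Malliavin derivatives that upgrade $\kappa$ to $\tilde p,\tilde q,\tilde r,\tilde w$ in \eqref{eqfirstp}--\eqref{eqfirstw}.

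Your plan instead invokes an ``anticipating It\^o formula'' for $\kappa(t)x_1(t)$ and asserts that the $x_1$-contribution collapses into the $\tilde p$-piece ``via the semigroup property of $G$ and a change in the order of integration.'' Two problems. First, the paper never uses an anticipating It\^o formula; it uses only the elementary duality relations \eqref{dualformB1}--\eqref{dualformPhi1} applied to fixed random variables $F(T)$, $\int_s^T\frac{\partial f}{\partial x}(t)\diffns t$, and $\Theta(t,s)$. Second, and more importantly, for a \emph{general} $\beta$ the process $x_1$ does not factor as (initial kick)$\times G(t,s)$: the variation-of-constants representation of the solution to the inhomogeneous linear jump SDE \eqref{derivstate1} involves Stratonovich-type cross-variation corrections between the homogeneous solution and the $\beta$-forcing in the $\diffns B$, $\widetilde N_\alpha$ and $\widetilde\Phi$ channels, so a naive Fubini argument will not reproduce $\int_t^T\Theta(t,s)\diffns s$. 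The differentiation-in-$h$ trick is exactly what sidesteps this difficulty by shrinking the support of $\beta$ to a single instant; you have replaced it with a one-line claim that hides the whole obstacle. Either carry out the spike-variation argument (split into $J_1(h)+J_2(h)$, use $x_1(s)=x_1(t+h)G(t+h,s)$, differentiate in $h$, apply duality to $\Theta(t,s)$), or supply a complete anticipating-calculus computation for general $\beta$; the latter is substantially longer and is not what the paper does.
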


Let us mention that if in addition of assumptions in Remark \ref{rem11111}, we suppose for example that the coefficient are twice continuously differentiable with the the second order derivative satisfying for example the assumptions in Remark \ref{rem11111} then $F(T),\Theta(t,s)$ and $\dfrac{\partial f}{\partial x}(t)$ are Malliavin differentiable with respect to $B,\widetilde{N}$ and $\widetilde{\Phi}$.
\begin{proof}
(1) $\Rightarrow$ (2). Assume that (1) holds then we have
\begin{align}
0=&\dfrac{\diffns }{\diffns \ell }J^{(u+\ell \beta)}(t)\Big. \Big|_{\ell=0}\notag\\
=&E\Big[\int_0^T \Big\{\dfrac{\partial f}{\partial x}(t)x_1(t)  +\dfrac{\partial f}{\partial y}(t)y_1(t)+\dfrac{\partial f}{\partial z}(t) z_1(t)+\int_{\mathbb{R}_0}\nabla_k f(t)k_1(t,\zeta)\nu_\alpha(\diffns \zeta)\Big.\notag \\
&\Big.+ \sum_{j=1}^D \dfrac{\partial f}{\partial v^j}(t)v_1^j(t)\lambda_j(t)+ \dfrac{\partial f}{\partial u}(t)\beta(t)\Big\} \diffns t+\dfrac{\partial \varphi}{\partial x}  (X(T),\alpha(T))x_1(T)+\psi^\prime(Y(0))y_1(0)\notag\\
&\Big.+\dfrac{\partial h}{\partial x} (X (T),\alpha (T))\Big(\tilde{A}(T)-\tilde{A}(T)\Big)x_1(T) \Big]. \label{eqIprim11g}
\end{align}

It follows from \eqref{derivstate1} and duality formula that for $F(T)$ defined by \eqref{eqcappsi} we get
\begin{align}
E\Big[F(T)x_1(T) \Big]=&E\Big[F(T) \Big\{\int_0^T\Big(\dfrac{\partial b}{\partial x}(t)x_1(t)+\dfrac{\partial b}{\partial u}(t)\beta(t)\Big)\diffns t+\int_0^T \Big(\dfrac{\partial \sigma}{\partial x}(t)x_1(t)+\dfrac{\partial \sigma}{\partial u}(t)\beta(t)\Big) \diffns B(t) \Big.\Big.\notag\\
& +\int_0^T \int_{\mathbb{R}_0} \Big(\dfrac{\partial \gamma}{\partial x}(t,\zeta)x_1(t) +\dfrac{\partial \gamma}{\partial u}(t,\zeta)\beta(t)\Big) \widetilde{N}_\alpha(\diffns \zeta,\diffns t) \notag\\
& +\sum_{j=1}^D\int_0^T \Big(\dfrac{\partial \eta^j}{\partial x}(t)x_1(t)-\dfrac{\partial \eta^j}{\partial u}(t)\beta(t) \Big)\diffns \widetilde{\Phi}_j(t) \Big\}\Big].\notag\\
=&E\Big[ \int_0^T \Big\{ F(T) \Big(\dfrac{\partial b}{\partial x}(t)x_1(t)+\dfrac{\partial b}{\partial u}(t)\beta(t)\Big)+ D_t^BF(T)\Big(\dfrac{\partial \sigma}{\partial x}(t)x_1(t)+\dfrac{\partial \sigma}{\partial u}(t)\beta(t)\Big) \Big.\notag\\
&+\int_{\mathbb{R}_0} D_{t,\zeta}^{\widetilde{N}_\alpha}F(T)\Big(\dfrac{\partial \gamma}{\partial x}(t,\zeta)x_1(t) +\dfrac{\partial \gamma}{\partial u}(t,\zeta)\beta(t)\Big) \nu_\alpha(\diffns \zeta)\notag\\
&\Big.+\sum_{j=1}^DD_t^{\widetilde{\Phi}_j}F(T)\Big(\dfrac{\partial \eta^j}{\partial x}(t)x_1(t)-\dfrac{\partial \eta^j}{\partial u}(t)\beta(t) \Big)\lambda_{j}(t) \Big\}\diffns t\Big]. \label{eqmalth1}
\end{align}
Similarly, we have
\begin{align}
E\Big[ \int_0^T \frac{\partial f }{\partial x }(t)x_1(t)\diffns t \Big]=&E\Big[\int_0^T \frac{\partial f }{\partial x }(t) \Big\{\int_0^t\Big(\dfrac{\partial b}{\partial x}(s)x_1(s)+\dfrac{\partial b}{\partial u}(s)\beta(s)\Big)\diffns s
 \Big.\Big.\notag\\
&+\int_0^t \Big(\dfrac{\partial \sigma}{\partial x}(s)x_1(s)+\dfrac{\partial \sigma}{\partial u}(s)\beta(s)\Big) \diffns B(s)\notag\\
 &+\int_0^t \int_{\mathbb{R}_0} \Big(\dfrac{\partial \gamma}{\partial x}(s,\zeta)x_1(s) +\dfrac{\partial \gamma}{\partial u}(s,\zeta)\beta(t)\Big) \widetilde{N}_\alpha(\diffns \zeta,\diffns s) \notag\\
 &+\sum_{j=1}^D\int_0^t \Big(\dfrac{\partial \eta^j}{\partial x}(s)x_1(s)-\dfrac{\partial \eta^j}{\partial u}(s)\beta(s) \Big)\diffns \widetilde{\Phi}_j(s) \Big\}\diffns t\Big].\notag
\end{align}
%=&E\Big[ \int_0^T\Big(\int_0^t \Big\{\frac{\partial f }{\partial x }(t) \Big(\dfrac{\partial b}{\partial x}(s)x_1(t)+\dfrac{\partial b}{\partial u}(s)\beta(s)\Big)\Big.\notag\\
%&+ D_s^B\Big(\frac{\partial f }{\partial x }(t)\Big)\Big(\dfrac{\partial \sigma}{\partial x}(s)x_1(s)+\dfrac{\partial \sigma}{\partial u}(s)\beta(s)\Big) \notag\\
%&+\int_{\mathbb{R}_0} D_{s,\zeta}^{\widetilde{N}}\Big(\frac{\partial f }{\partial x }(t)\Big)\Big(\dfrac{\partial \gamma}{\partial x}(s,\zeta)x_1(s) +\dfrac{\partial \gamma}{\partial u}(s,\zeta)\beta(s)\Big) \nu_\alpha(\diffns \zeta)\notag\\
%&\Big.+\sum_{j=1}^DD_s^{\widetilde{\Phi}_j}\Big(\frac{\partial f }{\partial x }(t)\Big)\Big(\dfrac{\partial \eta^j}{\partial x}(s)x_1(s)-\dfrac{\partial \eta^j}{\partial u}(s)\beta(s) \Big)\lambda_{j}(s) \Big\} \diffns s\Big) \diffns t\Big]\notag
\begin{align}
=&E\Big[ \int_0^T\Big(\int_s^T \frac{\partial f }{\partial x }(t)\diffns t\Big) \Big(\dfrac{\partial b}{\partial x}(s)x_1(t)+\dfrac{\partial b}{\partial u}(s)\beta(s)\Big)\Big.\notag\\
&+ \Big(\int_s^T D_s^B\Big(\frac{\partial f }{\partial x }(t)\Big)\diffns t\Big)\Big(\dfrac{\partial \sigma}{\partial x}(s)x_1(s)+\dfrac{\partial \sigma}{\partial u}(s)\beta(s)\Big) \notag\\
&+\int_{\mathbb{R}_0} \Big(\int_s^T D_{s,\zeta}^{\widetilde{N}_\alpha}\Big(\frac{\partial f }{\partial x }(t)\Big)\diffns t\Big)\Big(\dfrac{\partial \gamma}{\partial x}(s,\zeta)x_1(s) +\dfrac{\partial \gamma}{\partial u}(s,\zeta)\beta(s)\Big) \nu_\alpha(\diffns \zeta)\notag\\
&\Big.+\sum_{j=1}^D \Big(\int_s^T D_s^{\widetilde{\Phi}_j}\Big(\frac{\partial f }{\partial x }(t)\Big)  \diffns t \Big) \Big(\dfrac{\partial \eta^j}{\partial x}(s)x_1(s)-\dfrac{\partial \eta^j}{\partial u}(s)\beta(s) \Big)\lambda_{j}(s) \Big\} \diffns s\Big].\notag
\end{align}
Changing the notation $s \leftrightarrow t$, this becomes
\begin{align}
=&E\Big[ \int_0^T\Big(\int_t^T \frac{\partial f }{\partial x }(s)\diffns s\Big) \Big(\dfrac{\partial b}{\partial x}(t)x_1(t)+\dfrac{\partial b}{\partial u}(t)\beta(t)\Big)\Big.\notag\\
&+ \Big(\int_t^T D_t^B\Big(\frac{\partial f }{\partial x }(s)\Big)\diffns s\Big)\Big(\dfrac{\partial \sigma}{\partial x}(t)x_1(t)+\dfrac{\partial \sigma}{\partial u}(t)\beta(t)\Big) \notag\\
&+\int_{\mathbb{R}_0} \Big(\int_t^T D_{t,\zeta}^{\widetilde{N}_\alpha}\Big(\frac{\partial f }{\partial x }(s)\Big)\diffns s\Big)\Big(\dfrac{\partial \gamma}{\partial x}(t,\zeta)x_1(t) +\dfrac{\partial \gamma}{\partial u}(t,\zeta)\beta(t)\Big) \nu_\alpha(\diffns \zeta)\notag\\
&\Big.+\sum_{j=1}^D \Big(\int_t^T D_t^{\widetilde{\Phi}_j}\Big(\frac{\partial f }{\partial x }(s)\Big)  \diffns s \Big) \Big(\dfrac{\partial \eta^j}{\partial x}(t)x_1(t)-\dfrac{\partial \eta^j}{\partial u}(t)\beta(t) \Big)\lambda_{j}(t) \Big\} \diffns t\Big]. \label{eqmalth2}
\end{align}
Combining \eqref{eqKappa}, \eqref{eqcappsi}, \eqref{eqmalth1} and \eqref{eqmalth2}, we have

\begin{align}
&E\Big[\int_0^T \Big(\dfrac{\partial f}{\partial x}(t)x_1(t)  + \dfrac{\partial f}{\partial u}(t)\beta(t)\Big) \diffns t+\dfrac{\partial \varphi}{\partial x}  (X(T),\alpha(T))x_1(T)\Big]\notag\\
=&E\Big[\int_0^T \dfrac{\partial f}{\partial x}(t)x_1(t)   \diffns t + F(T)x_1(T)+ \int_0^T \dfrac{\partial f}{\partial u}(t)\beta(t) \diffns t-\dfrac{\partial h}{\partial x}  (X(T),\alpha(T))\tilde{A}(T)x_1(T)\Big]\notag\\
=&E\Big[ \int_0^T \Big\{ \kappa(t) \Big(\dfrac{\partial b}{\partial x}(t)x_1(t)+\dfrac{\partial b}{\partial u}(t)\beta(t)\Big)+ D_t^B\kappa(t) \Big(\dfrac{\partial \sigma}{\partial x}(t)x_1(t)+\dfrac{\partial \sigma}{\partial u}(t)\beta(t)\Big) \Big.\notag\\
&+\int_{\mathbb{R}_0} D_{t,\zeta}^{\widetilde{N}_\alpha}\kappa(t) \Big(\dfrac{\partial \gamma}{\partial x}(t,\zeta)x_1(t) +\dfrac{\partial \gamma}{\partial u}(t,\zeta)\beta(t)\Big) \nu_\alpha(\diffns \zeta)\notag\\
%\end{align}
%\begin{align}
&\Big.+\sum_{j=1}^DD_t^{\widetilde{\Phi}_j}\kappa(t) \Big(\dfrac{\partial \eta^j}{\partial x}(t)x_1(t)-\dfrac{\partial \eta^j}{\partial u}(t)\beta(t) \Big)\lambda_{j}(t) \Big\}\diffns t\notag\\
&+ \int_0^T \dfrac{\partial f}{\partial u}(t)\beta(t) \diffns t-\dfrac{\partial h}{\partial x}  (X(T),\alpha(T))\tilde{A}(T)x_1(T) \Big].\label{eqmalth3}
\end{align}
By the It\^o formula and \eqref{lambda111}, we have similarly to \eqref{eqIprim3}
\begin{align*}
E\Big[\psi^\prime(Y(0))y_1(0)\Big] =&E\Big[\tilde{A}(0)y_1(0)\Big]\notag\\
=&E\Big[\dfrac{\partial h}{\partial x}(X(T),\alpha(T))\tilde{A}(T)x_1(T)\Big] + E\Big[\int_0^T\Big\{\tilde{A}(t)\Big(\dfrac{\partial g}{\partial x}(t)x_1(t)  +\Big.\dfrac{\partial g}{\partial y}(t)y_1(t) \Big.\Big.\Big. \notag\\
&  +\dfrac{\partial g}{\partial z}(t)z_1(t)+\int_{\mathbb{R}_0}\nabla_k g (t)k_1(t,\zeta)\nu_\alpha(\diffns \zeta)   + \sum_{j=1}^D\dfrac{\partial g}{\partial v^j}(t)  v^j_1(t)\lambda_{j}(t)    \notag\\
&   +\dfrac{\partial g}{\partial u}(t)\beta(t)   \Big)-\dfrac{\partial H}{\partial y}(t)y_1(t) -\dfrac{\partial H}{\partial z}(t)z_1(t)-\int_{\mathbb{R}_0}\nabla_kH(t)k_1(t,\zeta)\nu_\alpha(\diffns \zeta) \notag\\
& -\sum_{j=1}^D\dfrac{\partial H}{\partial v^j}(t)  v^j_1(t)\lambda_{j}(t) \Big\}\diffns  t\Big] .
\end{align*}
But
\begin{align*}
\dfrac{\partial H}{\partial y}(t)=\dfrac{\partial f}{\partial y}(t)+\tilde{A}(t)\dfrac{\partial g}{\partial y}(t);&\,\,\, \dfrac{\partial H}{\partial z}(t)=\dfrac{\partial f}{\partial z}(t)+\tilde{A}(t)\dfrac{\partial g}{\partial z}(t)\\
\nabla_k H(t)=\nabla_kf(t)+\tilde{A}(t)\nabla_kg(t);& \,\,\,\dfrac{\partial H}{\partial v^j}(t)=\dfrac{\partial f}{\partial v^j}(t)+\tilde{A}(t)\dfrac{\partial g}{\partial v^j}(t),\,\,j=1,\ldots,D.
\end{align*}
Hence we have
\begin{align}
E\Big[\psi^\prime(Y(0))y_1(0)\Big] =&E\Big[\dfrac{\partial h}{\partial x}(X(T),\alpha(T))\tilde{A}(T)x_1(T)\Big] + E\Big[\int_0^T\Big\{\tilde{A}(t)\Big(\dfrac{\partial g}{\partial x}(t)x_1(t)+\dfrac{\partial g}{\partial u}(t)\beta(t)   \Big)\diffns t\notag\\
&-\int_0^T\Big\{ \dfrac{\partial f}{\partial y}(t)y_1(t) +\dfrac{\partial f}{\partial z}(t)z_1(t)   +\int_{\mathbb{R}_0}\nabla_k f (t,)k_1(t,\zeta)\nu_\alpha(\diffns \zeta)  \notag\\
& + \sum_{j=1}^D\dfrac{\partial g}{\partial v^j}(t)  v^j_1(t)\lambda_{j}(t)  \Big\}\diffns t\Big].
\label{eqmalth4}
\end{align}
Substitution \eqref{eqmalth1}-\eqref{eqmalth4} into \eqref{eqIprim11g}, we get
\begin{align}
0=&\dfrac{\diffns }{\diffns \ell}J^{(u+\ell \beta)}(t)\Big. \Big|_{\ell=0}\notag\\
=&E\Big[ \int_0^T \Big\{ \kappa(t) \dfrac{\partial b}{\partial x}(t)+ D_t^B\kappa(t)\dfrac{\partial \sigma}{\partial x}(t) +\int_{\mathbb{R}_0} D_{t,\zeta}^{\widetilde{N}_\alpha}\kappa(t)  \dfrac{\partial \gamma}{\partial x}(t,\zeta)  \nu_\alpha(\diffns \zeta)\notag\\
&+\sum_{j=1}^DD_t^{\widetilde{\Phi}_j}\kappa(t) \dfrac{\partial \eta^j}{\partial x}(t)+\tilde{A}(t)\dfrac{\partial g}{\partial x}(t)\Big\}x_1(t)\diffns t\Big]\notag\\
&+E\Big[ \int_0^T \Big\{ \kappa(t) \dfrac{\partial b}{\partial u}(t)+ D_t^B\kappa(t)\dfrac{\partial \sigma}{\partial u}(t) +\int_{\mathbb{R}_0} D_{t,\zeta}^{\widetilde{N}_\alpha}\kappa(t)  \dfrac{\partial \gamma}{\partial u}(t,\zeta)  \nu_\alpha(\diffns \zeta)\notag
\end{align}
\begin{align}
&+\sum_{j=1}^DD_t^{\widetilde{\Phi}_j}\kappa(t) \dfrac{\partial \eta^j}{\partial u}(t)+\dfrac{\partial f}{\partial u}(t)+\tilde{A}(t)\dfrac{\partial g}{\partial u}(t)\Big\}\beta(t)\diffns t\Big]. \label{eqmalth5}
\end{align}
Equation \eqref{eqmalth5} holds for all $\beta \in \mathcal{A}_{\mathcal{E}}$. In particular, if we apply this to $
\beta_{\theta}=\beta_{\theta}(s)=\theta(\omega)\chi_{(t,t+h]}(s),$ where $\theta(\omega)$ is $\mathcal{E}_t$-measure and $0\leq t\leq t+h \leq T.$ Hence we get by \eqref{derivstate1} that $
x_1=x_1^{(\beta_{\theta})}(s)=0 \text{ for } 0\leq s\leq t.$ Therefore \eqref{eqmalth5} can be rewritten as
\begin{align}
J_1(h)+J_2(h)=0,\label{eqj1j2}
\end{align}
where
\begin{align}
J_1(h)=&E\Big[ \int_t^T \Big\{ \kappa(s) \dfrac{\partial b}{\partial x}(s)+ D_s^B\kappa(s)\dfrac{\partial \sigma}{\partial x}(s) +\int_{\mathbb{R}_0} D_{s,\zeta}^{\widetilde{N}_\alpha}\kappa(s)  \dfrac{\partial \gamma}{\partial x}(s,\zeta)  \nu_\alpha(\diffns \zeta)\notag\\
&+\sum_{j=1}^DD_t^{\widetilde{\Phi}_j}\kappa(s) \dfrac{\partial \eta^j}{\partial x}(s)+\tilde{A}(s)\dfrac{\partial g}{\partial x}(s)\Big\}x_1(s)\diffns s\Big],\label{eqmalth51}\\
J_2(h)=&E\Big[\theta \int_t^{t+h} \Big\{ \kappa(s) \dfrac{\partial b}{\partial u}(s)+ D_t^B\kappa(s)\dfrac{\partial \sigma}{\partial u}(s) +\int_{\mathbb{R}_0} D_{s,\zeta}^{\widetilde{N}_\alpha}\kappa(s)  \dfrac{\partial \gamma}{\partial u}(s,\zeta)  \nu_\alpha(\diffns \zeta)\notag\\
&+\sum_{j=1}^DD_s^{\widetilde{\Phi}_j}\kappa(s) \dfrac{\partial \eta^j}{\partial u}(s)+\dfrac{\partial f}{\partial u}(s)+\tilde{A}(s)\dfrac{\partial g}{\partial u}(s)\Big\}\diffns s\Big] .\label{eqmalth52}
\end{align}
Note that for $x_1(s)=x_1^{(\beta_{\theta})}(s)$ we have, if $s\geq t+h$
\begin{equation*}
\diffns x_1(t) =x_1(t-) \Big\{\dfrac{\partial b}{\partial x}(t)\diffns t+\dfrac{\partial \sigma}{\partial x}(t)\diffns B(t)+\int_{\mathbb{R}_0} \dfrac{\partial \gamma}{\partial x}(t,\zeta)\widetilde{N}_\alpha(\diffns t,\diffns \zeta) + \dfrac{\partial \eta}{\partial x}(t)\cdot\diffns \widetilde{\Phi}(t) \Big\};\,\,t\in[0,T].%\label{derivstate12}
\end{equation*}
Hence by the It\^o formula we have $x_1(s)=x_1(t+h)G(t+h,s); \,\,\,s\geq t+h,$%\label{derivstate13}
where $G$ is defined by \eqref{eqG}. Since $G$ does not depend on $h$, it follows by the definition of $H_0$ (see \eqref{eqH0}) that
\begin{align*}
J_1(h)= E\Big[ \int_t^T \dfrac{\partial H_0}{\partial x}(s)x_1(s)\diffns s\Big]=E\Big[ \int_t^{t+h} \dfrac{\partial H_0}{\partial x}(s)x_1(s)\diffns s\Big]+E\Big[ \int_{t+h}^T \dfrac{\partial H_0}{\partial x}(s)x_1(s)\diffns s\Big].%\label{eqmalth511}
\end{align*}
Differentiating with respect to $h$ at $h=0$ gives
\begin{align}
\frac{\diffns }{\diffns h} J_1(h)\Big|_{h=0} =\frac{\diffns }{\diffns h}E\Big[ \int_t^{t+h} \dfrac{\partial H_0}{\partial x}(s)x_1(s)\diffns s\Big]_{h=0}+\frac{\diffns }{\diffns h}E\Big[ \int_{t+h}^T \dfrac{\partial H_0}{\partial x}(s)x_1(s)\diffns s\Big]_{h=0}.\label{eqmalth512}
\end{align}
Since $x_1(t)=0$, we get $
\dfrac{\diffns }{\diffns h}E\Big[ \displaystyle \int_t^{t+h} \dfrac{\partial H_0}{\partial x}(s)x_1(s)\diffns s\Big]_{h=0}=0.$
 Using the definition of $x_1(s)$, we have
 \begin{align}
\frac{\diffns }{\diffns h}E\Big[ \int_{t+h}^T \dfrac{\partial H_0}{\partial x}(s)x_1(s)\diffns s\Big]_{h=0}=&\frac{\diffns }{\diffns h}E\Big[ \int_{t+h}^T \dfrac{\partial H_0}{\partial x}(s)x_1(t+h)G(t+h,s) \diffns s\Big]_{h=0}\notag\\
=&\int_{t}^T\frac{\diffns }{\diffns h}E\Big[  \dfrac{\partial H_0}{\partial x}(s)x_1(t+h)G(t+h,s) \Big]_{h=0}\diffns s\notag\\
=&\int_{t}^T\frac{\diffns }{\diffns h}E\Big[  \dfrac{\partial H_0}{\partial x}(s)x_1(t+h)G(t,s) \Big]_{h=0}\diffns s, \label{eqmalth513}
\end{align}
where $X_1(t+h)$ is given by

 \begin{align}
x_1(t+h) =&\int_t^{t+h}\Big( x_1(r-) \Big\{\dfrac{\partial b}{\partial x}(r)\diffns r+\dfrac{\partial \sigma}{\partial x}(r) \diffns B(r)+\int_{\mathbb{R}_0} \dfrac{\partial \gamma}{\partial x}(r,\zeta)\widetilde{N}_\alpha(\diffns t,\diffns \zeta) + \dfrac{\partial \eta}{\partial x}(r)\cdot\diffns \widetilde{\Phi}(r) \Big\}\notag\\
+&\theta \Big\{\dfrac{\partial b}{\partial u}(r)\diffns r+\dfrac{\partial \sigma}{\partial u}(r) \diffns B(r)+\int_{\mathbb{R}_0} \dfrac{\partial \gamma}{\partial u}(r,\zeta)\widetilde{N}_\alpha(\diffns t,\diffns \zeta) + \dfrac{\partial \eta}{\partial u}(r)\cdot\diffns \widetilde{\Phi}(r) \Big\}\Big).\label{eqmalth514}
\end{align}
Therefore, by \eqref{eqmalth513} and \eqref{eqmalth514} $
\frac{\diffns }{\diffns h} J_1(h)\Big|_{h=0}=J_{1,1}(0)+J_{1,2}(0),$ with
 \begin{align}
J_{1,1}(0)=&\int_{t}^T\frac{\diffns }{\diffns h}E\Big[  \dfrac{\partial H_0}{\partial x}(s) G(t,s) \theta \int_t^{t+h}\Big\{\dfrac{\partial b}{\partial u}(r)\diffns r+\dfrac{\partial \sigma}{\partial u}(r) \diffns B(r)\notag\\
&+\int_{\mathbb{R}_0} \dfrac{\partial \gamma}{\partial u}(r,\zeta)\widetilde{N}_\alpha(\diffns t,\diffns \zeta) + \dfrac{\partial \eta}{\partial u}(r)\cdot\diffns \widetilde{\Phi}(r) \Big\} \Big]_{h=0}\diffns s
\label{eqmalth514}\\
J_{1,2}(0)=&\int_{t}^T\frac{\diffns }{\diffns h}E\Big[  \dfrac{\partial H_0}{\partial x}(s) G(t,s) \int_t^{t+h}x_1(r-)\Big\{\dfrac{\partial b}{\partial x}(r)\diffns r+\dfrac{\partial \sigma}{\partial x}(r) \diffns B(r)\notag\\
&+\int_{\mathbb{R}_0} \dfrac{\partial \gamma}{\partial x}(r,\zeta)\widetilde{N}_\alpha(\diffns t,\diffns \zeta) + \dfrac{\partial \eta}{\partial x}(r)\cdot\diffns \widetilde{\Phi}(r) \Big\} \Big]_{h=0}\diffns s.
\label{eqmalth515}
\end{align}
Since $x_1(t)=0$, we have $J_{1,2}(0)=0.$ We conclude that $\frac{\diffns }{\diffns h} J_1(h)\Big|_{h=0}=J_{1,1}(0).$ Using once more the duality formula, we get from \eqref{eqcaptheta} that
 \begin{align}
J_{1,1}(0)=&\int_{t}^T\frac{\diffns }{\diffns h}E\Big[  \theta \int_t^{t+h}\Big\{\dfrac{\partial b}{\partial u}(r) \Theta(t,s)+\dfrac{\partial \sigma}{\partial u}(r) D_r^B\Theta(t,s)\notag\\
&+\int_{\mathbb{R}_0} \dfrac{\partial \gamma}{\partial u}(r,\zeta)D_{r,\zeta}^{\widetilde{N}_\alpha} \Theta(t,s) \nu_\alpha(\diffns \zeta) + \sum_{j=1}^D \dfrac{\partial \eta^j}{\partial u}(r) D_{r}^{\widetilde{\Phi}_j} \Theta(t,s) \Big\} \diffns r \Big]_{h=0}\diffns s \notag\\
=&\int_{t}^TE\Big[  \Big\{\dfrac{\partial b}{\partial u}(t) \Theta(t,s)+\dfrac{\partial \sigma}{\partial u}(t) D_t^B\Theta(t,s)\notag\\
&+\int_{\mathbb{R}_0} \dfrac{\partial \gamma}{\partial u}(t,\zeta)D_{t,\zeta}^{\widetilde{N}_\alpha} \Theta(t,s) \nu_\alpha(\diffns \zeta) + \sum_{j=1}^D \dfrac{\partial \eta^j}{\partial u}(t) D_{t}^{\widetilde{\Phi}_j} \Theta(t,s)\lambda_j(t) \Big\} \Big]\diffns s\label{eqmalth516}
\end{align}
On the other hand, differentiating \eqref{eqmalth52} with respect to $h$ at $h=0$, we have
\begin{align}
\frac{\diffns }{\diffns h} J_2(h)\Big|_{h=0}=&E\Big[ \theta\Big\{ \kappa(t) \dfrac{\partial b}{\partial u}(t)+ D_t^B\kappa(t)\dfrac{\partial \sigma}{\partial u}(t) +\int_{\mathbb{R}_0} D_{t,\zeta}^{\widetilde{N}_\alpha}\kappa(t)  \dfrac{\partial \gamma}{\partial u}(t,\zeta)  \nu_\alpha(\diffns \zeta)\notag\\
&+\sum_{j=1}^DD_t^{\widetilde{\Phi}_j}\kappa(t) \dfrac{\partial \eta^j}{\partial u}(t)\lambda_j(t)+\dfrac{\partial f}{\partial u}(t)+\tilde{A}(t)\dfrac{\partial g}{\partial u}(t)\Big\}\Big] .\label{eqmalth5211}
\end{align}
Moreover, differentiating \eqref{eqj1j2} with respect to $h$ at $h=0$ gives

\begin{align}
&E\Big[ \theta\Big\{ \Big( \kappa(t) + \int_{t}^T  \Theta(t,s)\diffns s  \Big) \dfrac{\partial b}{\partial u}(t)+ D_t^B\Big( \kappa(t) + \int_{t}^T  \Theta(t,s)\diffns s  \Big) \dfrac{\partial \sigma}{\partial u}(t)\notag
\end{align}
\begin{align}
& +\int_{\mathbb{R}_0} D_{t,\zeta}^{\widetilde{N}_\alpha}\Big( \kappa(t) + \int_{t}^T  \Theta(t,s)\diffns s  \Big)  \dfrac{\partial \gamma}{\partial u}(t,\zeta)  \nu_\alpha(\diffns \zeta)\notag\\
&+\sum_{j=1}^DD_t^{\widetilde{\Phi}_j}\Big( \kappa(t) + \int_{t}^T  \Theta(t,s)\diffns s  \Big) \dfrac{\partial \eta^j}{\partial u}(t)\lambda_j(t)+\dfrac{\partial f}{\partial u}(t)+\tilde{A}(t)\dfrac{\partial g}{\partial u}(t)\Big\}\Big]=0 .\label{eqmalth5212}
\end{align}
Using \eqref{eqfirstq}-\eqref{eqfirstw} and \eqref{Hamiltoniansddg1} with $p,q,r,w$ replaced by $\tilde{p}, \tilde{q}, \tilde{r}, \tilde{w}$, we get

\begin{align*}
E\Big[ \theta \dfrac{\partial H}{\partial u}\Big(t,X(t),\alpha(t),Y(t), Z(t), K(t,\cdot),V(t),u,A(t),p(t),q(t),r(t,\cdot),w(t)\Big)_{u=u(t)}\Big]=0. %\label{eqmalth5213}
\end{align*}
Since this holds for all $\mathcal{E}_t$-measurable random variables $\theta$, we conclude that
{\small
\begin{align}
&E\Big[\dfrac{\partial H}{\partial u} (t,X(t),\alpha(t),Y(t), Z(t), K(t,\cdot),V(t),u,A(t),p(t),q(t),r(t,\cdot),w(t))_{u=u(t)} \Big. \Big| \mathcal{E}_t\Big] =0.\label{eqmalth5214}
\end{align}
}
(2) $\Rightarrow$ (1). Conversely, assume that there exist $u\in\mathcal{A}_{\mathcal{E}}$ such that \eqref{eqmalth5214} holds. Then by reversing the previous argument, we obtain that (1) holds for $\beta_{\theta}(s)=\theta(\omega)\chi_{(t,t+h]}(s) \in \mathcal{A}_{\mathcal{E}} $, where $\theta$ is bounded and  $\mathcal{E}_t$-measurable. Then \eqref{eqj1j2} holds for all linear combinations of $\beta_{\theta}$. Since all bounded $\beta \in \mathcal{A}_{\mathcal{E}} $ can be approximated pointwise boundedly in $(t,\omega)$ by such linear combination, it follows that \eqref{eqj1j2} is satisfied for all bounded $\beta \in \mathcal{A}_{\mathcal{E}} $. Thus reversing the remaining part of the previous proof, we get $\dfrac{\diffns }{\diffns \ell}J^{(u+\ell \beta)}(t)\Big. \Big|_{\ell=0}=0$ for all bounded $\beta \in \mathcal{A}_{\mathcal{E}}$.
\end{proof}

\section{Applications}\label{appli1}
\begin{appli} we shall  apply the results obtained to study an optimal control problem for Markov regime-switching with non-concave value function. Suppose that the state process $X(t)=X^{(u)}(t,\omega);\,\,0 \leq t \leq T,\,\omega \in \Omega$ is a controlled Markov regime-switching jump-diffusion of the form
\begin{equation}
\diffns X (t) = u(t)\Big\{ \sigma(t) \diff B(t)+ \displaystyle \int_{\mathbb{R}_0}\gamma(t,\zeta)\,\widetilde{N}(\diffns \zeta,\diffns t)\Big\},\,\,\,\, t \in [ 0,T], \,\,\,\,X(0)=  0 \label{eqstateprocessappl1}
\end{equation}
where $T>0$ is a given constant. $u(\cdot)$ is the control process. We shall assume here that $\widetilde{N}_\alpha=\widetilde{N}$ for any state of the Markov chain. Let us introduce the performance functional
{\small
\begin{align}
J(u)=E\Big[   \int_0^T\Big\{C_1(\alpha(t))u(t)+C_2(\alpha(t))u^2(t)+C_3(\alpha(t))X^2(t)\Big\} \diffns t +C_4(\alpha(T))X^2(T)\Big] .\label{perfuncappl1}
\end{align}
}
In this case, we have that
$$
 f(t,x,\alpha,y,z,k,v,u)=C_1(\alpha)u+C_2(\alpha)u^2+C_3(\alpha)x^2, \,\,\,\varphi(x,\alpha)=C_4(\alpha)x^2,\,\,\,\  g= \psi=0$$
\begin{align*}
 &\kappa(t)=2 C_4(\alpha(T))X(T)+2\int_t^TC_3(\alpha(s))X(s)\diffns s,\,\,\,\, A(t)=G(t,s)=0,\\
&H_0\left(t,x,e_i,y,z,k,v,u,\widetilde{a},\kappa\right)=D_t^B\kappa(t) u\sigma(t)+\int_{\mathbb{R}_0} D_{t,\zeta}^{\widetilde{N}_\alpha}\kappa(t)\gamma(t,\zeta)u\nu_i(\diffns \zeta),
\end{align*}
\begin{align*}
H\left(t,x,e_i,y,z,k,v,u,a,p,q,r,w\right)= &C_1(e_i)u+C_2(e_i)u^2+C_3(e_i)x^2 +  \tilde{q}(t) \sigma (t)u\\
&+\int_{\mathbb{R}_0}\tilde{r}(t,\zeta)\gamma(t,\zeta)u\nu_i(\diffns \zeta),
\end{align*}
with the modified adjoint processes given by
\begin{align*}
\tilde{p}(t)=& \kappa(t)+\int_t^T\frac{\partial H_0}{\partial x}(s)G(t,s) \diffns s=\kappa(t),\,\,\,\tilde{q}(t)= D_t^B\kappa(t),\\
\tilde{r}(t,\zeta)=&D_{t,\zeta}^{\widetilde{N}_\alpha}\kappa(t),\,\,\,   \tilde{w}^j(t)=D_{t}^{\widetilde{\Phi_j}}\kappa(t),\,\,\,j=1,\ldots,D.%\\
%H\left(t,x,e_i,y,z,k,v,u,a,p,q,r,w\right)= &C_1(e_i)u+C_2(e_i)u^2+C_3(e_i)x^2 +  \tilde{q}(t) \sigma (t)u\\
%&+\int_{\mathbb{R}_0}\tilde{r}(t,\zeta)\gamma(t,\zeta)u\nu_i(\diffns \zeta).
\end{align*}
\begin{rem}
The Hamiltonian in this case is not concave and therefore Theorem \ref{mainressuf1} cannot be applied. However, using the Malliavin calculus approach we are able to derive a stochastic maximum principle.
\end{rem}

\begin{thm}\label{thmappli1}
Assume that the state process is given by \textup{\eqref{eqstateprocessappl1}} and let the performance functional be given by \textup{\eqref{perfuncappl1}}. Moreover, assume that $\alpha(t)$ is a two-state Markov chain and that $\mathcal{E}_t=\mathcal{F}_t  \text{ for all } t\in [0,T]$. Assume that an optimal control exists. Then $u^\ast$ is an optimal control for \textup{ \eqref{prob111}} iff
\begin{align}\label{equopappli1}
u^\ast(t)=&\dfrac{-C_1(1)}{2C_2(1)+ 2\Gamma(t,T,1)\Big(\sigma^2(t)+\int_{\mathbb{R}_0}\gamma^2(t,\zeta) \nu(\diffns \zeta)\Big)}\chi_{\{\alpha(t-)=1\}}\notag\\
&+\dfrac{-C_1(2)}{2C_2(2)+ 2\Gamma(t,T,2)\Big(\sigma^2(t)+\int_{\mathbb{R}_0}\gamma^2(t,\zeta) \nu(\diffns \zeta)\Big)}\chi_{\{\alpha(t-)=2\}},
\end{align}
where
\begin{align}\label{eqGamopappli1}
\Gamma(t,T,1)=&C_4(1)+C_3(1)(T-t)+C_3(2,1)\frac{\lambda_{1,2}}{\lambda_{1,2} +\lambda_{2,1}}(T-t)\notag\\
&+\frac{\lambda_{1,2}\Big\{C_4(2,1)(\lambda_{1,2} +\lambda_{2,1})-C_3(2,1)\Big\}}{(\lambda_{1,2} +\lambda_{2,1})^2}\Big\{1-e^{(\lambda_{1,2} +\lambda_{2,1})(t-T)}\Big\}
\end{align}
and $\Gamma(t,T,2)$ is computed in a similar way.
\end{thm}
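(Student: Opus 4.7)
The plan is to invoke the equivalent Malliavin-type maximum principle of Theorem \ref{theomainneccon1G} with $\mathcal{E}_t=\mathcal{F}_t$. Since an optimal control is assumed to exist, its G\^ateaux derivative in any admissible direction vanishes, so condition (1) of that theorem holds, hence so does (2): the optimal $u^\ast$ satisfies
\begin{align*}
E\Big[\dfrac{\partial H}{\partial u}(t)\,\Big|\,\mathcal{F}_t\Big]=0\quad\text{for a.e.\ }t\in[0,T].
\end{align*}
For the present problem $g\equiv 0$, $\psi\equiv 0$, $b\equiv 0$ and $\eta\equiv 0$, so the adjoint equation \eqref{lambda111} has vanishing driver and vanishing initial condition, forcing $\tilde{A}\equiv 0$. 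The Hamiltonian therefore collapses to
\begin{align*}
H(t)=C_1(\alpha(t))u+C_2(\alpha(t))u^2+C_3(\alpha(t))x^2+\tilde{q}(t)\sigma(t)u+u\int_{\mathbb{R}_0}\tilde{r}(t,\zeta)\gamma(t,\zeta)\nu(\diffns\zeta),
\end{align*}
and $\dfrac{\partial H}{\partial u}$ is affine in $u$.

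The key step is to evaluate the modified adjoint processes $\tilde{q}(t)=D^B_t\kappa(t)$ and $\tilde{r}(t,\zeta)=D^{\widetilde{N}_\alpha}_{t,\zeta}\kappa(t)$, where $\kappa(t)=2C_4(\alpha(T))X(T)+2\int_t^TC_3(\alpha(s))X(s)\diffns s$. I would look for a critical point in the class of controls adapted to the Markov-chain filtration; for such $u$ the independence of $\alpha$ from $(B,\widetilde{N})$ gives $D^B_tu\equiv 0$ and $D^{\widetilde{N}_\alpha}_{t,\zeta}u\equiv 0$, and then the fundamental theorems \eqref{fundthm1}--\eqref{fundthm2} applied to the explicit form $X(s)=\int_0^su(r)\sigma(r)\diffns B(r)+\int_0^s\int_{\mathbb{R}_0}u(r)\gamma(r,\zeta)\widetilde{N}(\diffns\zeta,\diffns r)$ yield
\begin{align*}
D^B_tX(s)=u(t)\sigma(t),\qquad D^{\widetilde{N}_\alpha}_{t,\zeta}X(s)=u(t)\gamma(t,\zeta),\qquad s\geq t.
\end{align*}
This gives $\tilde{q}(t)=2u(t)\sigma(t)\bigl\{C_4(\alpha(T))+\int_t^TC_3(\alpha(s))\diffns s\bigr\}$ and the analogous expression for $\tilde{r}(t,\zeta)$. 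Plugging into the first-order condition, factoring $u(t)$, and conditioning on $\mathcal{F}_t$, the Markov property of $\alpha$ reduces the remaining conditional expectation to
\begin{align*}
\Gamma(t,T,i):=E\Big[C_4(\alpha(T))+\int_t^TC_3(\alpha(s))\diffns s\,\Big|\,\alpha(t)=e_i\Big],
\end{align*}
so that the optimality condition becomes the linear relation
\begin{align*}
C_1(\alpha(t))+2u(t)\Big\{C_2(\alpha(t))+\Gamma(t,T,\alpha(t))\Big[\sigma^2(t)+\int_{\mathbb{R}_0}\gamma^2(t,\zeta)\nu(\diffns\zeta)\Big]\Big\}=0,
\end{align*}
whose unique solution, split on $\alpha(t-)\in\{e_1,e_2\}$, is precisely \eqref{equopappli1}. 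Uniqueness of the critical point together with the assumed existence of an optimum gives the ``iff''.

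It only remains to compute $\Gamma(t,T,i)$ in closed form. Setting $p_{ij}(t,s):=P[\alpha(s)=e_j\,|\,\alpha(t)=e_i]$, Kolmogorov's forward equation $\tfrac{\diffns}{\diffns s}p_{ij}(t,s)=\sum_kp_{ik}(t,s)\lambda_{kj}$ with $p_{ij}(t,t)=\delta_{ij}$ is solved by diagonalising the $2\times 2$ rate matrix, yielding $p_{12}(t,s)=\tfrac{\lambda_{1,2}}{\lambda_{1,2}+\lambda_{2,1}}\bigl(1-e^{-(\lambda_{1,2}+\lambda_{2,1})(s-t)}\bigr)$ and analogous expressions for the other entries. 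Writing $\Gamma(t,T,1)=p_{11}(t,T)C_4(1)+p_{12}(t,T)C_4(2)+\int_t^T[p_{11}(t,s)C_3(1)+p_{12}(t,s)C_3(2)]\diffns s$, introducing the shorthand $C_k(2,1):=C_k(2)-C_k(1)$ for $k=3,4$, and carrying out the time integral recovers exactly \eqref{eqGamopappli1}; $\Gamma(t,T,2)$ is obtained by swapping indices. The main delicacy in the argument is the justification of the simple pointwise formulas for $\tilde{q}$ and $\tilde{r}$, which hinges on the a priori restriction to $\alpha$-adapted candidate controls; the subsequent computation of the two-state transition probabilities and their time integrals is lengthy but completely routine.
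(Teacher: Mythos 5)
Your overall route is the same as the paper's: invoke Theorem \ref{theomainneccon1G} with $\mathcal{E}_t=\mathcal{F}_t$, note $\tilde{A}\equiv 0$ so the first-order condition reduces to $C_1(\alpha(t))+2C_2(\alpha(t))u(t)+\sigma(t)E[\tilde{q}(t)|\mathcal{F}_t]+\int_{\mathbb{R}_0}E[\tilde{r}(t,\zeta)|\mathcal{F}_t]\gamma(t,\zeta)\nu(\diffns\zeta)=0$, compute $\tilde q=D^B_t\kappa$ and $\tilde r=D^{\widetilde N_\alpha}_{t,\zeta}\kappa$ via the fundamental theorems of Malliavin calculus, and then evaluate the conditional expectations through the two-state transition probabilities. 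Your identification $\Gamma(t,T,i)=E[C_4(\alpha(T))+\int_t^TC_3(\alpha(s))\diffns s\,|\,\alpha(t)=e_i]$ is exactly what the paper's closed form \eqref{eqGamopappli1} computes, and your Kolmogorov-equation evaluation of $p_{12}(t,s)$ and its time integral reproduces it correctly.

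The one genuine weak point is the step you yourself flag as the ``main delicacy'': you force $D^B_tu\equiv 0$ and $D^{\widetilde N_\alpha}_{t,\zeta}u\equiv 0$ by restricting a priori to controls adapted to the Markov-chain filtration, so that $D^B_tX(s)=u(t)\sigma(t)$ holds pointwise. This is not legitimate for the necessity direction: the theorem's first-order condition must be satisfied by whatever the optimal control is, and you have no a priori knowledge that $u^\ast$ lies in that restricted class (indeed \eqref{equopappli1} is $\alpha$-adapted only because $\sigma$ and $\gamma$ happen to be deterministic here). The restriction is also unnecessary. The paper keeps the terms $D^B_t\bigl(u(r)\sigma(r)\bigr)$ for $r>t$ inside stochastic integrals over $(t,T]$ (after an integration by parts to absorb the random factors $C_4(\alpha(r))$, $C_3(\alpha(r))$ into the integrands), and these contribute nothing once you take $E[\,\cdot\,|\mathcal{F}_t]$, being martingale increments. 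Since $\tilde q$ and $\tilde r$ enter the optimality condition only through their $\mathcal{F}_t$-conditional expectations, this conditioning argument is all that is needed, and it works for every admissible $u$. Replacing your adaptedness restriction by this martingale-increment observation closes the gap; everything else in your write-up then goes through as stated.
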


\begin{proof}

The condition (2) in Theorem \ref{theomainneccon1G} for an optimal control $\hat{u}(t)$ is one of the two
\begin{align}
E\Big[C_1(\alpha(t))+2C_2(\alpha(t))u(t)+\sigma(t)\tilde{q}(t)+\int_{\mathbb{R}_0}\tilde{r}(t,\zeta)\gamma(t,\zeta)\nu_\alpha(\diffns \zeta)\Big|\mathcal{E}_t\Big]=0,\label{hamilappl11}\\
E\Big[C_1(\alpha(t))+2C_2(\alpha(t))u(t)+\sigma(t)D_t^B\tilde{p}(t)+\int_{\mathbb{R}_0}D_{t,\zeta}^{\widetilde{N}_\alpha}\tilde{p}(t)\gamma(t,\zeta)\nu_\alpha(\diffns \zeta)\Big|\mathcal{E}_t\Big]=0. \label{hamilappl12}
\end{align}
Equation \eqref{hamilappl12} can be seen as a partial information, Markov switching Malliavin-differential type
equation in the unknown random variable $\tilde{p}(t)$. A similar equation was solved in \cite{OS091} in a non regime switching case when $\mathcal{E}_t=\mathcal{F}_t$. From now on, we set $\mathcal{E}_t=\mathcal{F}_t  \text{ for all } t\in [0,T]$ and that $\alpha$ is a two-state Markov chain. Using the fundamental theorem of calculus, we have

\begin{align*}
\tilde{q}(t)=D_t^B\tilde{p}(t)=&2C_4(\alpha(T))D_t^BX(T)+2\int_t^TC_3(\alpha(s))D_t^BX(s)\diffns s\\
=&2C_4(\alpha(T))\Big\{\int_t^T D_t^B \Big(u(r) \sigma(r)\Big)\diffns B(r) +u(t)\sigma(t)\\
& +\int_t^T\int_{\mathbb{R}_0}D_t^B\Big(u(r)\gamma(r,\zeta)\Big)\widetilde{N}_\alpha(\diffns \zeta,\diffns r)\Big\}\notag\\
%\end{align*}
%\begin{align*}
%=&2C_4(\alpha(T))\Big\{\int_t^T D_t^B \Big(u(r) \sigma(r)\Big)\diffns B(r) +u(t)\sigma(t)\notag\\
%& +\int_t^T\int_{\mathbb{R}_0}D_t^B\Big(u(r)\gamma(r,\zeta)\Big)\widetilde{N}_\alpha(\diffns \zeta,\diffns r)\Big\}\notag\\
&+ 2\int_t^T C_3(\alpha(s))\Big\{ \int_t^s D_t^B \Big(u(r)\sigma(r)\Big)\diffns B(r) +u(t)\sigma(t) \notag\\
&+ \int_t^s\int_{\mathbb{R}_0}D_t^B\Big(u(r)\gamma(r,\zeta)\Big)\widetilde{N}_\alpha(\diffns \zeta,\diffns r)\Big\}\diffns s.
\end{align*}
Using integration by parts formula (or product rule) we get
\begin{align}
\tilde{q}(t)=D_t^B\tilde{p}(t)=&2\Big\{C_4(\alpha(t))u(t)\sigma(t)+\int_t^TC_4(\alpha(r)) D_t^B \Big(u(r)\sigma(r)\Big)\diffns B(r) \notag\\
&+\int_t^T\int_{\mathbb{R}_0}C_4(\alpha(r))D_t^B\Big(u(r)\gamma(r,\zeta)\Big)\widetilde{N}_\alpha(\diffns \zeta,\diffns r)\notag\\
&+\int_t^TD_t^BX(r)\sum_{j=1,i\neq j}^D \lambda_{i,j}(C_4(j)-C_4(i))\chi_{(\alpha(r)=i)}\diffns r\notag\\
&+\int_t^TD_t^BX(r)\sum_{j=1,i\neq j}^D \lambda_{i,j}(C_4(j)-C_4(i))\chi_{(\alpha(r)=i)}\diffns m_{ij}(t)\Big\}\notag \\
&+2\Big\{\int_t^T \Big( C_3(\alpha(t))u(t)\sigma(t) +\int_t^sC_3(\alpha(r)) D_t^B\Big(u(r) \sigma(r)\Big)\diffns B(r) \notag \\
&+\int_{\mathbb{R}_0}\int_t^sC_3(\alpha(r))D_t^B\Big(u(r)\gamma(r,\zeta)\Big)\widetilde{N}_\alpha(\diffns \zeta,\diffns r)\notag\\
&+\int_t^sD_t^BX(r)\sum_{j=1,i\neq j}^D \lambda_{i,j}(C_3(j)-C_3(i))\chi_{(\alpha(r)=i)}\diffns r  \notag \\
&+\int_t^sD_t^BX(r)\sum_{j=1,i\neq j}^D \lambda_{i,j}(C_3(j)-C_3(i))\chi_{(\alpha(r)=i)}\diffns m_{ij}(t)\Big)\diffns s\Big\}.\label{eqqtilde1}
\end{align}

Taking conditional expectation with respect to $\mathcal{F}_t$, we have
\begin{align}\label{eqqtildecon1}
E\Big[\tilde{q}(t)\Big|\mathcal{F}_t\Big]=&2C_4(\alpha(t))u(t)\sigma(t)+2\int_t^Tu(t)\sigma(t)\sum_{j=1,i\neq j}^D \lambda_{i,j}(C_4(j)-C_4(i))E\Big[\chi_{(\alpha(r)=i)}\Big|\mathcal{F}_t\Big]\diffns r\notag\\
&+2C_3(\alpha(t))u(t)\sigma(t)(T-t) \notag\\
&+2\int_t^T\int_t^su(t)\sigma(t) \sum_{j=1,i\neq j}^D \lambda_{i,j}(C_3(j)-C_3(i))E\Big[\chi_{(\alpha(r)=i)}\Big|\mathcal{F}_t\Big]\diffns r \diff s.
\end{align}
Let $\alpha(t)=e_1$ and for $n=1,2,3,4$, let $C_n(i)$ be the value of the function $C_n$ at $1$. Define $C_n(2,1)$ for $n=1,2,3,4$ by $C_n(2,1):=C_n(2)-C_n(1).$  We have

\begin{align}
E\Big[\tilde{q}(t)\Big|\mathcal{F}_t\Big]=&2C_4(1)u(t)\sigma(t)+2\int_t^Tu(t)\sigma(t) \Big(\lambda_{1,2}(C_4(2)-C_4(1))E\Big[\chi_{(\alpha(r)=1)}\Big|\alpha(t)=1\Big]\notag\\
& +\lambda_{2,1}(C_4(1)-C_4(2))E\Big[\chi_{(\alpha(r)=2)}\Big|\alpha(t)=1\Big]\Big)\diffns r+2 C_3(1)u(t)\sigma(t)(T-t) \notag\\
&+2\int_t^T\int_t^su(t)\sigma(t)\Big(\lambda_{1,2}(C_3(2)-C_3(1))E\Big[\chi_{(\alpha(r)=1)}\Big|\alpha(t)=1\Big]\notag\\
& +\lambda_{2,1}(C_3(1)-C_3(2))E\Big[\chi_{(\alpha(r)=2)}\Big|\alpha(t)=1\Big]\Big)\diffns r  \diff s\notag
\end{align}
\begin{align}
=&2C_4(1)u(t)\sigma(t)+2\int_t^Tu(t)\sigma(t) \Big(\lambda_{1,2}(C_4(2)-C_4(1))P(\alpha(r)=1|\alpha(t)=1)\notag\\
& +\lambda_{2,1}(C_4(1)-C_4(2))P(\alpha(r)=2|\alpha(t)=1)\Big)\diffns r+2 C_3(1)u(t)\sigma(t)(T-t)\notag\\
%&+2 C_3(1)u(t)\sigma(t)(T-t) \notag\\
&+2\int_t^T\int_t^su(t)\sigma(t)\Big(\lambda_{1,2}(C_3(2)-C_3(1))P(\alpha(r)=1|\alpha(t)=1)\notag\\
& +\lambda_{2,1}(C_3(1)-C_3(2))P(\alpha(r)=2|\alpha(t)=1)\Big)\diffns r  \diff s.\notag
\end{align}
Using the transition probability for a two-state Markov chain we get
\begin{align}
E\Big[\tilde{q}(t)\Big|\mathcal{F}_t\Big]=&2C_4(1)u(t)\sigma(t)+2u(t)\sigma(t)C_4(2,1)\int_t^T\Big(\lambda_{1,2}\frac{\lambda_{1,2}e^{(\lambda_{1,2} +\lambda_{2,1})(t-r)}+\lambda_{2,1}}{\lambda_{1,2} +\lambda_{2,1}}\notag\\
&-\lambda_{2,1}\frac{\lambda_{1,2}-\lambda_{1,2}e^{(\lambda_{1,2} +\lambda_{2,1})(t-r)}}{\lambda_{1,2} +\lambda_{2,1}}\Big)\diffns r+2 C_3(1)u(t)\sigma(t)(T-t) \notag\\
&+2C_3(2,1)u(t)\sigma(t)\int_t^T\int_t^s\Big(\lambda_{1,2}\frac{\lambda_{1,2}e^{(\lambda_{1,2} +\lambda_{2,1})(t-r)}+\lambda_{2,1}}{\lambda_{1,2} +\lambda_{2,1}}\notag\\
&-\lambda_{2,1}\frac{\lambda_{1,2}-\lambda_{1,2}e^{(\lambda_{1,2} +\lambda_{2,1})(t-r)}}{\lambda_{1,2} +\lambda_{2,2}}\Big)\diffns r \diff s \notag\\
&=2C_4(1)u(t)\sigma(t)+2u(t)\sigma(t)C_4(2,1)\frac{\lambda_{1,2}}{\lambda_{1,2} +\lambda_{2,1}}\Big(1-e^{(\lambda_{1,2} +\lambda_{2,1})(t-T)}\Big)\notag \\
&+2 C_3(1)u(t)\sigma(t)(T-t)+ 2C_3(2,1)u(t)\sigma(t) \frac{\lambda_{1,2}}{\lambda_{1,2} +\lambda_{2,1}}(T-t)\notag\\
&-2C_3(2,1)u(t)\sigma(t)\frac{\lambda_{1,2}}{(\lambda_{1,2} +\lambda_{2,1})^2}\Big(1-e^{(\lambda_{1,2} +\lambda_{2,1})(t-T)}\Big)\notag\\
=&2u(t)\sigma(t)\Big(C_4(1)+C_3(1)(T-t)+C_3(2,1)\frac{\lambda_{1,2}}{\lambda_{1,2} +\lambda_{2,1}}(T-t)\notag\\
&+\frac{\lambda_{1,2}\Big\{C_4(2,1)(\lambda_{1,2} +\lambda_{2,1})-C_3(2,1)\Big\}}{(\lambda_{1,2} +\lambda_{2,1})^2}\Big\{1-e^{(\lambda_{1,2} +\lambda_{2,1})(t-T)}\Big\}\Big).\label{eqqtildecon1}
\end{align}
On the other hand, If $\alpha(t)=e_1$, using the integration by parts formula and the fundamental theorem of calculus, we have
\begin{align}
E\Big[\tilde{r}(t,\zeta)\Big|\mathcal{F}_t\Big]=&2C_4(1)u(t)\gamma(t,\zeta)+2\int_t^Tu(t)\gamma(t,\zeta) \Big(\lambda_{1,2}(C_4(2)-C_4(1))E\Big[\chi_{(\alpha(r)=1)}\Big|\alpha(t)=1\Big]\notag\\
& +\lambda_{2,1}(C_4(1)-C_4(2))E\Big[\chi_{(\alpha(r)=2)}\Big|\alpha(t)=1\Big]\Big)\diffns r+2 C_3(1)u(t)\gamma(t,\zeta)(T-t)\notag\\
&+2\int_t^T\int_t^su(t)\gamma(t,\zeta)\Big(\lambda_{1,2}(C_3(2)-C_3(1))E\Big[\chi_{(\alpha(r)=1)}\Big|\alpha(t)=1\Big]\notag\\
& +\lambda_{2,1}(C_3(1)-C_3(2))E\Big[\chi_{(\alpha(r)=2)}\Big|\alpha(t)=1\Big]\Big)\diffns r  \diff s\notag
\end{align}
\begin{align}
=&2C_4(1)u(t)\gamma(t,\zeta)+2\int_t^Tu(t)\gamma(t,\zeta) \Big(\lambda_{1,2}(C_4(2)-C_4(1))P(\alpha(r)=1|\alpha(t)=1)\notag\\
& +\lambda_{2,1}(C_4(1)-C_4(2))P(\alpha(r)=2|\alpha(t)=1)\Big)\diffns r+2 C_3(1)u(t)\gamma(t,\zeta)(T-t) \notag\\
%&+2 C_3(1)u(t)\gamma(t,\zeta)(T-t) \notag\\
&+2\int_t^T\int_t^su(t)\gamma(t,\zeta)\Big(\lambda_{1,2}(C_3(2)-C_3(1))P(\alpha(r)=1|\alpha(t)=1)\notag\\
& +\lambda_{2,1}(C_3(1)-C_3(2))P(\alpha(r)=2|\alpha(t)=1)\Big)\diffns r  \diff s.\notag
\end{align}
Similarly, we get
\begin{align}\label{eqrtildecon1}
E\Big[\tilde{r}(t,\zeta)\Big|\mathcal{F}_t\Big]=&2u(t)\gamma(t,\zeta)\Big(C_4(1)+C_3(1)(T-t)+C_3(2,1)\frac{\lambda_{1,2}}{\lambda_{1,2} +\lambda_{2,1}}(T-t)\notag\\
&+\frac{\lambda_{1,2}\Big\{C_4(2,1)(\lambda_{1,2} +\lambda_{2,1})-C_3(2,1)\Big\}}{(\lambda_{1,2} +\lambda_{2,1})^2}\Big\{1-e^{(\lambda_{1,2} +\lambda_{2,1})(t-T)}\Big\}\Big).
\end{align}
Then, the result follows for $\alpha(t)=e_1$. Performing the same computations, one get an expression for $\Gamma(t,T,1)$. This complete the proof.
\end{proof}

The following corollary is a generalization of the result obtained in \cite[Example 4.7]{LZ13}.

\begin{cor} Assume that conditions of \textup{Theorem \ref{thmappli1}} are satisfied. Moreover assume that $C_1,C_2,C_3,C_4:I\rightarrow \mathbb{R}$ satisfy $C_1(1)=-1,C_1(2)=0,C_2(1)=0,C_2(2)=-\frac{1}{2}$, \\$C_3(1)=0,C_3(2)=1,C_4(1)=\frac{1}{2},C_4(2)=1$
%\begin{equation*}
%\begin{array}{llll}
%\left\{
%\begin{array}{llll}
%C_1(1) & = & -1 \\
%C_1(2) &=& 0
%\end{array}\right. ,
%&
%\left\{
%\begin{array}{llll}
%C_2(1) & = & 0 \\
%C_2(2) &=& \frac{1}{2}
%\end{array}\right. ,
%&
%\left\{
%\begin{array}{llll}
%C_3(1) & = & 0 \\
%C_3(2) &=& 1
%\end{array}\right. ,
%&
%\left\{
%\begin{array}{llll}
%C_4(1) & = & \frac{1}{2} \\
%C_4(2) &=& 1
%\end{array}\right. .
%\end{array}
%\end{equation*}
Then the optimal control $u^\ast$ for \textup{\eqref{prob111}} satisfies:
\begin{align}\label{equopappli2}
u^\ast(t)=&\dfrac{1}{2\Gamma(t,T,1)\Big(\sigma^2(t)+\int_{\mathbb{R}_0}\gamma^2(t,\zeta) \nu(\diffns \zeta)\Big)}\chi_{\{\alpha(t-)=1\}}+0\times \chi_{\{\alpha(t-)=2\}},
\end{align}
where $\Gamma(t,T,1)=\frac{1}{2}+\frac{\lambda_{1,2}}{\lambda_{1,2} +\lambda_{2,1}}(T-t)+\frac{\lambda_{1,2}\Big\{\frac{1}{2}(\lambda_{1,2} +\lambda_{2,1})-1\Big\}}{(\lambda_{1,2} +\lambda_{2,1})^2}\Big\{1-e^{(\lambda_{1,2} +\lambda_{2,1})(t-T)}\Big\}.$
%\begin{align}\label{eqGamopappli2}
%\Gamma(t,T,1)=&\frac{1}{2}+\frac{\lambda_{1,2}}{\lambda_{1,2} +\lambda_{2,1}}(T-t)+\frac{\lambda_{1,2}\Big\{\frac{1}{2}(\lambda_{1,2} +\lambda_{2,1})-1\Big\}}{(\lambda_{1,2} +\lambda_{2,1})^2}\Big\{1-e^{(\lambda_{1,2} +\lambda_{2,1})(t-T)}\Big\}.
%\end{align}
\end{cor}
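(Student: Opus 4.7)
The plan is to apply Theorem \ref{thmappli1} directly, with the specific choices of coefficients furnished by the hypothesis. First I would record the two differences that enter the formula \eqref{eqGamopappli1}, namely $C_3(2,1)=C_3(2)-C_3(1)=1$ and $C_4(2,1)=C_4(2)-C_4(1)=\tfrac12$, and then substitute these together with $C_4(1)=\tfrac12$ and $C_3(1)=0$ into \eqref{eqGamopappli1}. The $C_3(1)(T-t)$ term drops out immediately, and the remaining contributions regroup into exactly the closed-form expression for $\Gamma(t,T,1)$ claimed in the statement.

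Second, I would insert $C_1(1)=-1$ and $C_2(1)=0$ into the first summand of \eqref{equopappli1}. The numerator reduces to $1$ and the denominator to $2\Gamma(t,T,1)\bigl(\sigma^2(t)+\int_{\mathbb{R}_0}\gamma^2(t,\zeta)\nu(\diffns\zeta)\bigr)$, producing the first term of \eqref{equopappli2}. For the second summand, the hypothesis $C_1(2)=0$ makes the numerator vanish identically, so the whole term collapses to $0\cdot\chi_{\{\alpha(t-)=2\}}$, independent of $\Gamma(t,T,2)$; in particular the latter quantity need not be computed at all for the stated formula.

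There is no substantive obstacle: the corollary is a direct numerical specialization of Theorem \ref{thmappli1}, and the argument amounts to careful algebraic bookkeeping. One only needs to remark that the deterministic constants $C_n(e_i)$ trivially satisfy the measurability and integrability prerequisites of Theorem \ref{thmappli1}, and that the existence of an optimal control posited there continues to hold for this degenerate choice of constants, since the performance functional \eqref{perfuncappl1} remains a well-defined quadratic-plus-linear expression in $u$ in each regime and the first-order condition furnished by Theorem \ref{theomainneccon1G} yields a unique critical point in state $e_1$ together with the (trivial) vanishing prescription in state $e_2$.
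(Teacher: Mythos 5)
Your proposal is correct and follows essentially the same route as the paper, which states the corollary without a separate proof precisely because it is a direct numerical specialization of Theorem \ref{thmappli1}: substituting the given constants yields $C_3(2,1)=1$, $C_4(2,1)=\tfrac12$, hence the stated $\Gamma(t,T,1)$, and the two summands of \eqref{equopappli1} reduce to the claimed expression, the second vanishing because $C_1(2)=0$. Your remark that $\Gamma(t,T,2)$ need not be computed is a small but accurate observation consistent with the paper's presentation.
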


\end{appli}

%\begin{rem}
%Let mention that, the results obtained in the previous section can also be applied to study recursive utility maximization for Markov regime-switching jump diffusion model. In this case, the generator of the recursive utility can change according to the regime.
%\end{rem}
\begin{appli}
We shall now use the results of Section \ref{mallcalappro} to study a problem of recursive utility maximization.
Consider a financial market with two investments possibilities: a risk free asset (bond) with the unit price $S_0(t)$ at time $t$ and a risky asset (stock) with unit price $S(t)$ at time $t$.

Let $r(t)$ be the instantaneous interest rate of the risk free asset at time $t$. If \\$r_t:=r(t,\alpha(t))=\langle \underline{r}|\alpha(t)\rangle$, where $\langle\cdot|\cdot\rangle$ is the usual scalar product in $\mathbb{R}^D$ and \\$\underline{r}=(r_1,r_2, \ldots ,r_D)\in \mathbb{R_+}^D$, then the price dynamic of $S_0$ is given by:
\begin{align}\label{eqbond1}
\diffns S_0(t)=&r(t)S_0(t)\diffns t,\,\,\,S_0(0)=1.
\end{align}
The appreciation rate $\mu(t)$ and the volatility $\sigma(t)$ of the stock at time time $t$ are defined by
\begin{align}\label{dri}
\mu(t):=\mu(t,\alpha(t))=\langle \underline{\mu}|\alpha(t)\rangle, \,\,\,\sigma(t):=\sigma(t,\alpha(t))=\langle \underline{\sigma}|\alpha(t)\rangle\quad t\in [0,T]
\end{align}
 where $\underline{\mu}=(\mu_1,\mu_2, \ldots ,\mu_D)\in \mathbb{R}^D$ and $\underline{\sigma}=(\sigma_1,\sigma_2, \ldots ,\sigma_D)\in \mathbb{R_+}^D$. The stock price process $S$ is described by the following Markov modulated L\'{e}vy process
\begin{align}\label{risk}
 \diffns S(t)=S(t^-)\Big(\mu(t)\diffns t+\sigma(t)\diffns B (t)+\int_{\mathbb{R}\backslash\{0\}}\gamma(t,\zeta)\widetilde{N}_\alpha(\diffns t,\diffns \zeta)\Big),\quad S(0)>0.
\end{align}

Here  $r(t)\geq 0,\,\,\mu(t),\,\,\sigma(t)$ and $\gamma(t,\zeta)>-1+\varepsilon$ \textup{(}for some constant $\varepsilon>0$\textup{)} are given $\mathcal{E}_t$-predictable, integrable processes, where $\left\{\mathcal{E}_t\right\}_{t\in\left[0,T\right]}$ is a given filtration such that \\$\mathcal{E}_t\subset \mathcal{F}_t \text{ for all } t\in [0,T].$

Suppose that, a trader in this market chooses a portfolio $u(t)$, representing the amount she invests in the risky asset at time $t$, then this portfolio is a $\mathcal{E}_t$-predictable stochastic process. Choosing $S_0(t)$ as a numeraire, and setting without loss of generality $r(t)=0$, one can show \textup{(}see \cite{DOPP2011} for such a derivation\textup{)} that the corresponding wealth process $X(t)=X^{(u)}(t)$ satisfies
\begin{equation}\label{portprocess3}
\diffns X(t)=u(t)\Big[\mu(t)\diffns t+\sigma(t)\diffns B(t)+\displaystyle \int_{\mathbb{R}_0}\gamma(t,\zeta)\widetilde{N}_\alpha(\diffns t,\diffns \zeta)\Big],\,\,\,X(0)=x> 0.
\end{equation}
Consider the following stochastic recursive utility, which is given by a Markov switching BSDE.
\begin{align}
 Y (t)  = &  X(T)+ \int_t^T g(s,Y(t),\alpha(s),\omega)\diff s +\int_t^T Z(s)\diff B(s) +\int_t^T  \int_{\mathbb{R}_0}K(s,\zeta)\,\widetilde{N}_\alpha(\diffns \zeta,\diffns s)\notag\\
&+\int_t^T  V(s)\cdot \diffns \widetilde{\Phi}(s), \label{eqassBSDEapp}
\end{align}
where $g:[0,T]\times \mathbb{R} \times \mathbb{S}  \times \mathcal{U} \times \Omega \rightarrow \mathbb{R}$ is such that the BSDE \eqref{eqassBSDEapp} has a unique solution and $(t,\omega) \rightarrow g(t,x,e_i,\omega)$ is $\mathcal{F}_t$-predictable for each given $x$ and $e_i$.  We aim at finding $u^\ast$ and $Y^\ast$ such that $
Y^{(u^\ast)}(0)=\sup_{u\in  \mathcal{A}_{\mathcal{E} }} Y^{(u)}(0)=Y^\ast.$

Assume that $\alpha(t)$ is a two states Markov process and that $g(t,Y(t),\alpha(t),\omega)$ is given by:
\begin{equation}
  g(t,Y(t),1,\omega) =  -c_1(t) Y(t)\ln Y(t) +c_2(t)Y(t) ,\,\,\,g(t,Y(t),2,\omega) = c(t)Y(t)+c_0(t) \label{forwardadjoinappl1}
\end{equation}

Using \textup{Theorem \ref{theomainneccon1G}}, one can show in a similar way as in \cite[Section 5]{OS091},
\begin{thm}
Suppose that $g(t,y,\alpha)$ is as in \textup{\eqref{forwardadjoinappl1}}, $c_1$ is deterministic. Let $\tilde{A} (T)$ be the solution of modified forward adjoint equation and suppose that $\beta$ and $\theta$ satisfy
$$
\mu(t,\alpha) +\sigma(t,\alpha)\beta(t,\alpha)+\int_{\mathbb{R}_0} \gamma(t,\alpha,\zeta) \theta(t,\alpha,\zeta) \nu_\alpha(\diffns \zeta)=0 \text{ for a.a. } t,\omega.
$$
Moreover, assume that $
E\Big[\exp\Big(\int_0^Tc(t) \diffns t\Big)\Big(1+\int_0^T|c_0(t)| \diffns t\Big)\Big]<\infty.$ In addition, suppose that an optimal control $u^\ast$ exists . Then the maximal differential utility is given by:
\begin{align}
  Y^\ast(0,1) = & x\Big(\exp\int_0^Tc_1(t) \diffns t \Big) E[\tilde{A} (T)]\label{slouappliuti1} \\
Y^\ast(0,2) =& xE\Big[\exp\int_0^Tc(t) \diffns t\Big] + \int_0^T E\Big[c_0(t)\exp\int_0^Tc(t) \diffns t\Big]\diffns t
\label{slouappliuti2}
\end{align}

\end{thm}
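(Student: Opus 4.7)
The proof follows a two-stage strategy: first verify optimality of $u^\ast$ via Theorem \ref{theomainneccon1G}, and second solve the BSDE \eqref{eqassBSDEapp} explicitly in each initial regime by exploiting a martingale identity for $Y(t)\tilde A(t)$ together with a suitable integrating factor.

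For stage one, I would identify the data of the general problem: $f\equiv 0$, $\varphi\equiv 0$, $\psi(y)=y$, $h(x,\alpha)=x$, so that $\tilde A(0)=\psi^\prime(Y(0))=1$, while the state-equation coefficients in \eqref{portprocess3} depend on $u$ but not on $x$. This collapses the Malliavin-type adjoint system \eqref{eqKappa}--\eqref{eqG}: one finds $G(t,s)\equiv 1$, $\kappa(t)\equiv \tilde A(T)$, and hence $\tilde p(t)=\tilde A(T)$, $\tilde q(t)=D^B_t\tilde A(T)$, $\tilde r(t,\zeta)=D^{\widetilde N_\alpha}_{t,\zeta}\tilde A(T)$. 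Since $g$ does not involve $u$, the condition $E[\partial H/\partial u\mid \mathcal E_t]=0$ reduces to
$$E\Big[\mu(t,\alpha)\tilde A(T)+\sigma(t,\alpha)D^B_t\tilde A(T)+\int_{\mathbb R_0}\gamma(t,\alpha,\zeta)D^{\widetilde N_\alpha}_{t,\zeta}\tilde A(T)\,\nu_\alpha(d\zeta)\,\Big|\,\mathcal E_t\Big]=0,$$
which is exactly the theorem's hypothesis once $\beta$ and $\theta$ are interpreted as the logarithmic Malliavin derivatives of $\tilde A(T)$. As in \cite{OS091}, the same hypothesis implies that $X(t)\tilde A(t)$ is a local martingale; the integrability assumption on $\exp\int_0^T c(t)\,dt$ promotes it to a true martingale, yielding the key identity $E[X(T)\tilde A(T)]=x\,E[\tilde A(T)]$.

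For stage two in regime $\alpha(0)=2$, the forward-adjoint equation \eqref{lambda111} reduces to $d\tilde A=c(t)\tilde A\,dt$ (since $\partial_y g=c$ and the remaining partial derivatives vanish). Applying the It\^o product rule to $Y\tilde A$ gives, after cancelling the $c(t)Y\tilde A$ drift contributions from both factors, $d(Y\tilde A)=-c_0(t)\tilde A\,dt+\tilde A\cdot(\text{local martingale differentials})$. Hence $Y(t)\tilde A(t)+\int_0^t c_0(s)\tilde A(s)\,ds$ is a local martingale, upgraded to a true martingale by the integrability hypothesis. Equating expectations at times $0$ and $T$, substituting $Y(T)=X(T)$, applying the identity from stage one, and using $\tilde A(s)=\exp\int_0^s c(r)\,dr$, yields \eqref{slouappliuti2}. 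In regime $\alpha(0)=1$ the forward-adjoint drift is $\tilde A[-c_1(\ln Y+1)+c_2]$ and the BSDE driver is $c_1 Y\ln Y-c_2 Y$; expanding $d(Y\tilde A)$ shows that the $\ln Y$ and $c_2$ terms cancel pairwise, leaving drift $-c_1 Y\tilde A\,dt$. Multiplying by the deterministic integrating factor $\exp\int_0^t c_1(s)\,ds$ yields a local (hence true) martingale $M(t)=Y(t)\tilde A(t)\exp\int_0^t c_1$, so $Y(0)=E[M(T)]=\exp\!\int_0^T c_1\cdot E[X(T)\tilde A(T)]=x\exp\!\int_0^T c_1\cdot E[\tilde A(T)]$, which is \eqref{slouappliuti1}.

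The main obstacle is the clean algebraic cancellation in regime $\alpha(0)=1$: one must carefully account for the Markov-chain jump contributions to $d\tilde A$ that arise when $\alpha$ transitions between states during $[0,T]$, and verify that the scalar integrating factor $\exp\int_0^t c_1$ remains valid globally. This is precisely the step where the deterministic-$c_1$ assumption becomes indispensable, since only then does the integrating factor commute with the expectation and produce the explicit formula \eqref{slouappliuti1}. Finally, the localization-to-true-martingale step, required to take expectations in both regimes, is justified by the stated integrability conditions on $\exp\int c$ and by the standard moment estimates on the Markov-regime-switching BSDE \eqref{eqassBSDEapp}.
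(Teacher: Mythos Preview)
Your approach is essentially the same as the paper's: the paper's entire proof is the sentence ``It follows using Theorem~\ref{theomainneccon1G} and the arguments in \cite[Section~5]{OS091},'' and you have correctly reconstructed what that means --- identify the data so that $\kappa(t)=\tilde A(T)$ and $\tilde p(t)=\tilde A(T)$, read off the first-order condition as the market-price-of-risk relation, and then compute $Y(0)$ via the product $Y\tilde A$ with an integrating factor. Your drift cancellations in both regimes are correct.

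One imprecision is worth flagging. You claim that the hypothesis implies $X(t)\tilde A(t)$ is a local martingale and hence $E[X(T)\tilde A(T)]=x\,E[\tilde A(T)]$. This is not quite right: the forward adjoint $\tilde A$ has only a drift term (its diffusion and jump coefficients vanish because $g$ is independent of $z,k,v$), so $X\tilde A$ carries the uncompensated drift $\tilde A\,u\mu+X\tilde A\,g_y$. The correct object is $X(t)\,Z(t)$ with $Z(t):=E[\tilde A(T)\mid\mathcal F_t]$. By Clark--Ocone, $Z$ is a martingale whose integrands are $E[D^B_t\tilde A(T)\mid\mathcal F_t]$, $E[D^{\widetilde N_\alpha}_{t,\zeta}\tilde A(T)\mid\mathcal F_t]$, etc.; the first-order condition of Theorem~\ref{theomainneccon1G} (with $\mathcal E_t=\mathcal F_t$) then says precisely that the drift of $XZ$ vanishes, yielding $E[X(T)\tilde A(T)]=E[X(T)Z(T)]=xZ(0)=x\,E[\tilde A(T)]$. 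This is the mechanism in \cite[Section~5]{OS091}, and with this correction your outline is in line with the paper's intended argument, including the acknowledged delicacy of the Markov-chain jump contributions.
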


\begin{proof}
It follows using Theorem \ref{theomainneccon1G} and the arguments in \cite[Section 5]{OS091}.
\end{proof}
\end{appli}
%\section{Appendix}

%Let $V$ be an open subset of a Banach space $\mathcal{X}$ and let $F: V \rightarrow \mathbb{R}$.
%\begin{itemize}
%\item We say that $F$ has a \textsl{directional derivative} (or Gateaux derivative) at $x\in V$ in the direction $y\in \mathcal{X}$ if $
%D_yF(x):=\underset{\varepsilon \rightarrow 0}{\lim} \frac{1}{\varepsilon}(F(x + \varepsilon y)-F(x))  $  exists.
%\item We say that $F$ is Fr\' echet differentiable at $x \in V$ if there exists a linear map $
%L:\mathcal{X} \rightarrow \mathbb{R}$,
%such that $
%\underset{\underset{h \in \mathcal{X}}{h \rightarrow 0}}{\lim} \frac{1}{\|h\|}|F(x+h)-F(x)-L(h)|=0.$ In this case we call $L$ the Fr\' echet derivative of $F$ at $x$, and we write $ L=\nabla_x F.$
%\item If $F$ is Fr\' echet differentiable, then $F$ has a directional derivative in all directions
%$y \in \mathcal{X}$ and $D_yF(x)= \nabla_x F(y).$
%\end{itemize}

\subsection*{Acknowledgment} The author would like to thank Corina Constantinescu and Apostolos Papaioannou for their helpful comments.

 \end{document}